\newcommand{\com}{\ifthenelse{\boolean{comm}}}
\newcommand{\sol}{\ifthenelse{\boolean{sol}}}
\newcommand{\note}{\ifthenelse{\boolean{notes}}}
\newtheorem{Def}{Definition}
\newtheorem{Not}[Def]{Notation}
\newtheorem{Prop}[Def]{Proposition}
\newtheorem{Th}[Def]{Theorem}
\newtheorem{Lem}[Def]{Lemma}
\theoremstyle{Def}
\newtheorem{Rem}[Def]{Remark}
\newcommand{\mC}{\ensuremath{\mathbb{C}}}					
\newcommand{\mN}{\ensuremath{\mathbb{N}}}
\newcommand{\mZ}{\ensuremath{\mathbb{Z}}}
\newcommand{\mK}{\ensuremath{\mathbb{K}}}
\newcommand{\mT}{\ensuremath{\mathbb{T}}}
\newcommand{\mc}{\mathcal}								
\DeclareMathOperator{\Image}{Im}
\newcommand{\ee}{\mbox{$\varepsilon$}}
\renewcommand{\phi}{\varphi}
\newcommand{\mb}{\begin{pmatrix}}					
\newcommand{\me}{\end{pmatrix}}						
\newcommand{\stoe}{\ensuremath{\mathcal{T}^\infty}}		
\DeclareMathOperator{\Ad}{Ad}							
\newcommand{\sco}{\ensuremath{\mathcal{K}}}					
\DeclareMathOperator{\Ker}{Ker}						
\DeclareMathOperator{\id}{id}							
\newcommand{\rrarrow}{\rightrightarrows}
\newcommand{\mathbbm}{\mathbb}
\newcommand{\Aa}{\mathscr{A}}
\newcommand{\action}{\curvearrowright}
\newcommand{\derp}[2][]{\frac{\partial #1}{\partial #2}}
\newcommand{\inj}{\hookrightarrow}
\newcommand{\Modl}{\mathcal{E}}
					\newcommandx*{\ModQHM}[2][1={},2={}]{{M^{#1}_{#2}}}
\newcommand{\N}{\mathbbm{N}}
							\newcommandx*\QHM[2][1={},2={}]{{D^{#1}_{#2}}}
							\newcommandx*\QHMl[2][1={},2={}]{{\mathscr{D}^{#1}_{#2}}}
\newcommand{\R}{\mathbbm{R}}
\newcommand{\Toep}{\stoe}
\newcommand{\ToepTop}{{\mathcal{T}_\mc{B}}}
\newcommand{\ToepTopAlg}{{\mathcal{T}_\mc{B}^0}}
\newcommand{\Tt}{\mathcal{T}}
\newcommand{\TensTop}{\otimes_\pi}
\newcommand{\Z}{\mathbbm{Z}}
\DeclareMathOperator{\im}{Im}
\renewcommand{\sco}{\ensuremath{\mathbb{K}^\infty}}
\begin{document}

\title{\Large{\MakeUppercase{Six-Term Exact Sequences for Smooth Generalized Crossed Products}}}
\author{\small{Olivier \textsc{GABRIEL} and Martin \textsc{GRENSING}}}
\affil{%
Université Denis \textsc{Diderot} -- Paris 7\\
175, rue du Chevaleret, 75 013 Paris FRANCE\\
E-mail: \texttt{gabriel@math.jussieu.fr} and \texttt{mgren@math.jussieu.fr}}

\date{\small{\today}}
\maketitle
\thispagestyle{empty}
\begin{abstract} \small{We define smooth generalized crossed products and prove six-term exact sequences of Pimsner-Voiculescu type. This sequence may, in particular, be applied to smooth subalgebras of the Quantum Heisenberg Manifolds in order to compute the generators of their cyclic cohomology. Our proof is based on a combination of arguments from the setting of (Cuntz-)Pimsner algebras and the Toeplitz proof of Bott-periodicity.}
\end{abstract}

\tableofcontents

\section{Introduction} In classical bivariant $K$-theory, the Pimsner-Voiculescu-six-term exact sequence (PMV-sequence)  \cite{MR587369} is one of the main results, allowing to calculate the $K$-theory of crossed products in many  cases. While proved originally for monovariant $K$-theory, it can be proved more simply using $KK$-theory, see \cite{BlackK}. J. Cuntz (\cite{MR750677}) has given a proof that applies, at least partially, to all split exact, homotopy invariant stable functors on the category of $C^*$-algebras with values in the category of abelian groups. His proof is based on the notion of quasihomomorphism (see Definition \ref{Def:Qmorphisme}). These techniques were later generalized to the setting of locally convex (or even bornological) algebras (\cite{CuntzMeyer}). The by now classical Toeplitz proof of Bott periodicity for $C^*$-algebras (\cite{MR750677})  may then be modified to yield a proof of the six-term exact sequence, and J. Cuntz gives in \cite{MR2240217} an analogous sequence involving smooth crossed products in the locally convex setting, and outlines a proof.

Our result partially extends the results obtained by R. Nest in \cite{CohomCyclZ}. In this article, a six-term exact sequence was established for smooth crossed products by $\Z$, for the particular case of periodic cyclic cohomology and under additional hypotheses on the algebra and the action.

Meanwhile, M. Pimsner presents in \cite{MR1426840} a simultaneous generalization of crossed products and Cuntz-Krieger algebras, and proves a PMV-type sequence, using a certain universal Toeplitz-type $C^*$-algebra. The algebra we are ultimately interested in is a ``smooth version'' of the so-called Pimsner algebras, which can be described (at least in the $C^*$-setting) in an essentially equivalent way using generalized crossed products (see \cite{AbadieEE} and section \ref{Sec:SmoothCPAlg} below). We would like to note that, eventhough we do not refer to \cite{KhoshSkand} explicitly, it has to some extent guided our approach.

Our aim is to establish a six-term exact sequence for half-exact, diffotopy-invariant and $\sco$-stable functors. The proof is a  combination of Pimsner's methods in the $C^*$-setting and Cuntz's techniques introduced in the Toeplitz proof of Bott periodicity in the smooth setting.  One of the main results is the ``equivalence'' of a certain Toeplitz-type algebra with a smaller algebra as formulated in Theorem \ref{PMV}. Further, the passage from crossed products to our setting naturally causes difficulties when considering the equivalence between the ``base algebra'' and a certain ideal inside the Toeplitz-type algebra. It is here that we have to impose further size-restrictions on the algebras we start with. We finally apply this to Quantum Heisenberg Manifolds (QHM).

The contents of this article are as follow: in section \ref{Sec:Prelim}, we introduce some general theory related to locally convex algebras. Section \ref{contexts} is devoted to a smooth version of Morita equivalence, namely Morita contexts. Quasihomomorphisms and their properties are introduced in section \ref{Sec:Qmorphisme}, while the smooth Toeplitz algebra (in the sense of Cuntz) is defined in section \ref{Sec:stoe}. Both will play essential parts in our construction. 

At this point, we are ready to introduce our notion of \emph{smooth generalized crossed product} (section \ref{Sec:SmoothCPAlg}) and the associated Toeplitz-type algebras (section \ref{Sec:ExtAlgLC}). A natural extension of locally convex algebras is then given, which involves a kernel and the Toeplitz-type algebra. To get a PMV-sequence, we need to prove that both the kernel and the Toeplitz algebra are ``equivalent'' to a certain base algebra (compare Theorems \ref{context} and \ref{PMV}). This is taken care of in sections \ref{MoritaContext} and \ref{Sec:ToepBase}, respectively. Finally, we assume half-exactness of $H$ and obtain the six-term exact sequence in section \ref{Sec:6terms}. 

As applications, we first consider smooth crossed products in section \ref{Sec:AppliPiCrG}; then in section \ref{Sec:AppliQHM}, we construct  six-term exact sequences for the Quantum Heisenberg Manifolds (QHM) in $kk$ and cyclic theory, that allow to completely determine the cyclic invariants of the QHM. The $C^*$-QHM can be considered as generalized crossed products, and therefore the long exact sequences are available. When dealing with cyclic theory, one has to stay in the smooth category. We show thus that the smooth QHM satisfy our growth conditions, and our techniques therefore apply. Consequently, we get Theorem \ref{QHMDIAG} below as a concrete result. 

The explicit calculation of the periodic cyclic cohomology of the QHM, based on our results, is done in another article (see \cite{OlPair}).

Both authors would like to express their gratitude to G. Skandalis for helpful discussions, remarks and advice.

\section{Preliminaries}
\label{Sec:Prelim}
We denote the completed projective tensor product by $\otimes_\pi$. By a \emph{locally convex algebra} we will mean a complete locally convex vector space that is at the same time a topological algebra, \textsl{i.e.}, the multiplication is continuous. Hence if $\mc{A}$ is a locally convex algebra, this means that for every continuous seminorm $p$ on $\mc{A}$ there is a continuous seminorm $q$ on $\mc{A}$ such that for all $a,b\in \mc{A}$
$$p(ab) \leqslant q(a)q(b).$$
A \emph{$m$-algebra} is a locally convex algebra whose topology is defined by submultiplicative seminorms, \textsl{i.e.}, we can take $q = p$ in the above inequality.

If $\mc{A}$ is a locally convex algebra, we denote by $Z\mc{A}$ the locally convex algebra of differentiable functions from $[0,1]$ to $\mc{A}$ all of whose derivatives vanish at the endpoints. We denote by $ev^\mc{A}_t \colon  Z\mc{A}\to \mc{A}$ the evaluations in $t\in I = [0,1]$. The \emph{smooth suspension} $\mathscr{S} \mc{A}$ of a locally convex algebra $\mc{A}$ is 
$$\mathscr{S} \mc{A} = \left\{ f \in C^\infty([0,1], \mc{A}) \middle| \forall k \in \N, f^{(k)}(0) = f^{(k)}(1) = 0 \right\} .$$
Notice that $\mathscr{S} \mc{A}$ is \emph{not} equal to $Z \mc{A}$ since for $\mathscr{S} \mc{A}$ we ask that $f(0) = f(1) = 0$.

\begin{Def}
Let $\phi_0,\phi_1\colon \mc{A} \to \mc{B}$ be homomorphisms of locally convex algebras. A \emph{diffotopy} between $\phi_0$ and $\phi_1$ is by definition a homomorphism $\Phi \colon \mc{A}\to Z \mc{B}$ such that $ev^\mc{B}_i\circ\Phi=\phi_i$ for $i=0,1$.
\end{Def}

\begin{Def} The \emph{smooth compact operators} are defined as those  compact operators $A\in\mathbb{B}\left (\ell^2({\mN}) \right )$ such that, if $(a_{i,j})$ is the representation of $A$ with respect to the standard basis, then for all $m,n\in\mN$:
$$\sup_{i,j\in\mN} (1+i)^n(1+j)^m|a_{i,j}|\leqslant \infty.$$
They are topologized by the increasing family of seminorms $\|_{m,n}$ with
$$\|A\|_{m,n}:=\sum_{i,j} (1+i)^m(1+j)^n|a_{i,j}|.$$
\end{Def}
For any locally convex algebra $\mc{B}$, the elements of $\sco\otimes_\pi \mc{B}$ are just the matrices with rapidly decreasing coefficients in $\mc{B}$ (see \cite{CuntzMeyer}, chapter 2, 3.4).

\begin{Def}
We distinguish between on the one hand \emph{split exact sequences} in the category of locally convex algebras, \textsl{i.e.},
\[\xymatrix{0\ar[r]&\mc{A}\ar[r]&\mc{B}\ar[r]&\mc{C}\ar[r]&0}\]
with a splitting $\mc{C} \to \mc{B}$ which is an \emph{algebra homomorphism} and on the other hand \emph{linearly split exact sequences}, in which the splitting is only a (continuous) linear map.
\end{Def}

\begin{Def}
A functor $H$ on the category of locally convex algebras with values in abelian groups is called 
\begin{itemize}
\item \emph{$\sco$-stable} if the natural inclusion $\theta:\mc{A}\to \sco\otimes_\pi \mc{A},\; a\mapsto e\otimes a$ obtained by any minimal idempotent $e\in \sco$ induces an isomorphism
$$H(\theta) \colon  H(\mc{A})\to H(\sco\otimes_\pi \mc{A}),$$
\item \emph{diffotopy-invariant}, if $H(ev^{\mc{A}}_0)=H(ev^{\mc{A}}_1)$  for every locally convex algebra $\mc{A}$,
\item \emph{split exact} if for every short exact sequence of locally convex algebras
\[\xymatrix{0\ar[r]&\mc{A}\ar[r]&\mc{B}\ar[r]&\mc{C}\ar[r]&0}\]
admitting a split  the sequence
\[\xymatrix{0\ar[r]&H(\mc{A})\ar[r]&H(\mc{B})\ar[r]&H(\mc{C})\ar[r]&0}\]
is exact.
\end{itemize}
\end{Def}

\section{Morita contexts and split exact functors}
\label{contexts}

The following definition of Morita context is basically from \cite{MR2240217}.  We add some isomorphisms to the definitions in \cite{MR2240217} in order to make the existence of a Morita context a weaker condition than being isomorphic.

\begin{Def}
\label{contextcond}
Let $\mc{A}$ and $\mc{B}$ be locally convex algebras. A \emph{Morita context} from $\mc{A}$ to $\mc{B}$ is given by data $(\phi,\mc{D},\psi,\xi_i,\eta_i)$, where $\mc{D}$ is a locally convex algebra, $\phi \colon \mc{A}\to \mc{D}$, $\psi \colon \mc{B}\to \mc{D}$ are isomorphisms onto subalgebras of $\mc{D}$, and sequences $\eta_i$, $\xi_i$ in $\mc{D}$ such that 
\begin{enumerate}
\item $\eta_i \phi(\mc{A})\xi_j\in\psi(\mc{B})$ for all $i,j$;
\item $(\eta_i \phi(a)\xi_j)_{ij}\in\sco\otimes_\pi \psi(\mc{B})$;
\item\label{convergencecontext} $\sum \xi_i\eta_i \phi(a)=\phi(a)$ for all $a\in \mc{A}$ (convergence in $\phi(\mc{A})$).
\end{enumerate}
\end{Def}

With this definition:
\begin{itemize}
\item If $\phi:\mc{A}\to \mc{B}$ is an isomorphism, then we get a Morita context $(\phi^+,\mc{B}^+,\cdot^+)$ from $\mc{A}$ to $\mc{B}$, where $\mc{B}^+$ is the unitization of $\mc{B}$, $b\mapsto b^+$ the canonical embedding, and $\phi^+=\phi\circ\cdot^+$.
\item In particular, there is now a canonical Morita context from $\mc{A}$ to $\mc{A}$, for any locally convex algebra $\mc{A}$.
\item If $\mc{B}\subseteq \sco\otimes_\pi \mc{C}$ is a subalgebra, and we call a corner in $\mc{B}$ a subalgebra $\mc{A}\subseteq \mc{B}$ of the form $\sum_{i=0}^k e_{0i}\mc{B}\sum_{i=0}^k e_{i0}$, then there is a trivial context from $\mc{A}$ to $\mc{B}$.
\item If $\mc{B}$ is row and column-stable ($e_{0i}\mc{B},\mc{B}e_{i0}\subseteq \mc{B}$ for all $i$), then there is a context from $\mc{B}$ to $\mc{A}$, where $\mc{B}$ is a subalgebra of $\sco\otimes_\pi \mc{A}$.
\end{itemize}
 
\begin{Def} 
If $H$ is a $\sco$-stable functor, then a Morita context from $\mc{A}$ to $\mc{B}$ induces a map $\theta \colon \mc{A} \to \sco\otimes_\pi \mc{B},\,a\mapsto (\psi^{-1}(\eta_i\phi(a)\xi_j))_{ij}$ and a morphism $H(\mc{A}) \to H(\mc{B})$ of abelian groups $H(\phi,\mc{D},\psi,\xi_i,\eta_i):=H(\theta)$.
\end{Def}
Note that for $a,a'\in \mc{A}$ we have 
\begin{align*}
\left(\psi^{-1}(\eta_i\phi(a)\xi_j) \right)_{ij} \left(\psi^{-1}(\eta_k\phi(a')\xi_l)\right)_{kl}= \left(\psi^{-1}\left(\eta_i\phi(a)\sum_{m}\xi_m\eta_m \phi(a')\xi_j \right)\right)_{ij}
\end{align*}
which equals $\theta(aa')$ by \ref{convergencecontext} in the definition of a Morita context. Hence $\theta$ is indeed a homomorphism.

\begin{Def}
\label{Def:BicontextMorita}
 A \emph{Morita bicontext} between locally convex algebras $\mc{A}$ and $\mc{B}$ is given by two Morita contexts from $\mc{A}$ to $\mc{B}$ and $\mc{B}$ to $\mc{A}$ respectively, of the form $(\phi, \mc{D},\psi,\xi_i^\mc{A},\eta_i^\mc{A})$ and $(\psi,\mc{D},\phi,\xi_i^\mc{B},\eta_i^\mc{B})$ such that 
\begin{enumerate}
\item $\phi(\mc{A})\xi_i^\mc{A}\xi_j^\mc{B}\subseteq\phi(\mc{A})$, $\eta_i^\mc{B}\eta_j^\mc{A}\phi(\mc{A})\subseteq \phi(\mc{A})$ (left compatibility);
\item $\psi\mc{B}\xi_i^\mc{B}\xi_j^\mc{A}\subseteq \psi\mc{B}$, $\eta_i^\mc{A}\eta_j^\mc{B}\psi\mc{B}\subseteq \psi\mc{B}$ (right compatibility).
\end{enumerate}
\end{Def}

\begin{Th}\label{contweakeriso} Given two Morita contexts as in the above definition, and a diffotopy-invariant, $\sco$-stable functor $H$, we have
$$H(\psi,\mc{D},\phi,\xi_i^\mc{B},\eta_i^\mc{B})\circ H(\phi, \mc{D},\psi,\xi_i^\mc{A},\eta_i^\mc{A})= \id_{H(\mc{A})}\text{ if they are left compatible;}$$
$$ H(\phi, \mc{D},\psi,\xi_i^\mc{A},\eta_i^\mc{A})\circ H(\psi,\mc{D},\phi,\xi_i^\mc{B},\eta_i^\mc{B})=\id_{H(\mc{B})}\text{ if they are right compatible.}$$
\end{Th}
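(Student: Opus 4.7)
I focus on the first identity; the second case is completely symmetric.

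By functoriality and $\sco$-stability of $H$, the composite $H(\theta_\mc{B}) \circ H(\theta_\mc{A})$ is represented by the homomorphism
\[
\Theta := (\id_\sco \otimes \theta_\mc{B}) \circ \theta_\mc{A} \colon \mc{A} \longrightarrow \sco \otimes_\pi \sco \otimes_\pi \mc{A},
\qquad
\Theta(a)_{(i,k),(j,l)} = \phi^{-1}\bigl(\eta_k^\mc{B}\eta_i^\mc{A}\phi(a)\xi_j^\mc{A}\xi_l^\mc{B}\bigr).
\]
Left compatibility is exactly what is needed to ensure the argument of $\phi^{-1}$ lies in $\phi(\mc{A})$: first $\phi(a)\xi_j^\mc{A}\xi_l^\mc{B}\in\phi(\mc{A})$ by the $\xi$-compatibility, then $\eta_k^\mc{B}\eta_i^\mc{A}\phi(\mc{A})\subseteq\phi(\mc{A})$ finishes the job. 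After identifying $\sco\otimes_\pi\sco\cong\sco$ via a bijection $\N\times\N\to\N$ (the choice being irrelevant by $\sco$-stability), the goal becomes $H(\Theta) = \id_{H(\mc{A})}$.

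The strategy is to recognize $\Theta$ as arising from a ``composed'' Morita context from $\mc{A}$ to $\mc{A}$ inside $\mc{D}$, with vectors $X_{(i,k)}:=\xi_i^\mc{A}\xi_k^\mc{B}$ and $Y_{(k,i)}:=\eta_k^\mc{B}\eta_i^\mc{A}$. Axioms (i) and (ii) of Definition \ref{contextcond} follow from left compatibility together with the corresponding axioms for the two given contexts. The convergence axiom (iii), $\sum_{i,k} X_{(i,k)} Y_{(k,i)} \phi(a) = \phi(a)$, is the main algebraic point and combines both original convergence identities together with the bridging inclusion $\eta_i^\mc{A}\phi(a)\xi_j^\mc{A}\in\psi(\mc{B})$ from axiom (i) of the first context, which is what allows $\sum_k \xi_k^\mc{B}\eta_k^\mc{B}$ to act as the identity on the inner expressions.

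It then suffices to prove that any Morita context from $\mc{A}$ to itself induces $\id_{H(\mc{A})}$. This is done by diffotopically comparing the composed context with the trivial one $(\phi,\mc{D}^+,\phi,1,1)$, whose induced map is the canonical corner inclusion $\mc{A}\hookrightarrow \sco\otimes_\pi\mc{A}$ and therefore gives $\id_{H(\mc{A})}$ by $\sco$-stability. The comparison is a rotation diffotopy in the style of Cuntz's Toeplitz proof of Bott periodicity: place the two maps orthogonally inside $M_2(\sco\otimes_\pi\mc{A})$ and interpolate through a path of $2\times 2$ rotations, producing a homomorphism into $Z\bigl(M_2\otimes \sco\otimes_\pi\mc{A}\bigr)$; diffotopy invariance of $H$ then concludes.

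The main obstacle is this last step: for the rotation path to define a genuine homomorphism into $Z(\sco\otimes_\pi\mc{A})$, the rotated matrices must remain rapidly decreasing for all $t\in[0,1]$, which demands uniform summability estimates on the sequences $\xi_i,\eta_i$. These ultimately come from axiom (ii) of the original contexts — which forces $(\eta_i\phi(a)\xi_j)_{ij}\in\sco\otimes_\pi\psi(\mc{B})$ — but carefully propagating this rapid-decay structure through the composed-context construction, and verifying the convergence of the double sum in step two in the locally convex topology, are the technical points that require the most care.
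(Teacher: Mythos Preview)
Your proposal is correct and follows essentially the same route as the paper's proof: both reduce to the composite $(\id_{\sco}\otimes\theta_{\mc B})\circ\theta_{\mc A}$ with entries $\phi^{-1}(\eta^{\mc B}\eta^{\mc A}\phi(a)\xi^{\mc A}\xi^{\mc B})$, and then use the $\cos t/\sin t$ rotation diffotopy (indexed over $\{0\}\cup\N^2$) to connect it to the corner embedding, invoking $\sco$-stability to conclude. Your repackaging via a ``composed Morita context from $\mc A$ to $\mc A$'' and your explicit flagging of the rapid-decay estimates are useful conceptual additions, but the underlying argument is the same as the paper's adaptation of \cite{MR2240217}, Lemma~7.2.
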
\newcommand{\potimes}{\otimes_\pi}
\begin{proof}
The proof is an adaptation of the one from \cite{MR2240217}, Lemma 7.2. Denote the isomorphism $H(\mc{A})\to H(\sco\potimes \mc{A})$ given by $\sco$-stability by $\ee_\mc{A}$. Then more precisely, we have to show that
\begin{align*}
H(\phi,\mc{D},\psi,\xi_i^\mc{B},\eta_i^\mc{B})\circ \ee_\mc{B}^{-1}\circ H(\phi, \mc{D},\psi,\xi_i^\mc{A},\eta_i^\mc{A})
\end{align*}
is invertible. We suppose left compatibility; then denoting $\theta^\mc{A}$ and $\theta^\mc{B}$ the maps $\mc{A}\to \sco\potimes \mc{B}$ and $\mc{B}\to\sco\potimes \mc{A}$ determined by the two contexts, and multiplying by $\ee_{\sco\potimes \mc{A}}$  on the left, we see that it suffices to show that the composition $(\sco\otimes\theta_\mc{B})\circ\theta_\mc{A}$ induces an invertible map under $H$. Now this is the map
\begin{equation}
\label{compo}
a\mapsto (\phi^{-1}(\eta_i^\mc{B}(\eta_k^\mc{A}\phi(a)\xi_l^\mc{A})\xi_j^\mc{B}))_{iklj}
\end{equation}
and it is diffotopic to the stabilisation as follows. The $L\times L$ matrix with entries $\phi^{-1}(\hat\eta_\alpha(t)\phi(a)\hat\xi_\beta(t))$, $\alpha,\beta\in \mN^2\cup\{0\}$ with
\begin{align*}
\hat\xi_0(t)=\cos(t) 1\hspace{2cm} \hat\xi_{il}(t)=\sin(t)\xi_i^B\xi_l^A\\
\hat\eta_0(t)=\cos(t) 1\hspace{2cm}\hat\eta_{kj}=\sin(t) \eta_k^B\eta_j^A
\end{align*}
yields a diffotopy of the map in equation \eqref{compo}.
\end{proof}

\section{Quasihomomorphisms}
\label{Sec:Qmorphisme}
We collect some basic properties of quasihomomorphisms as introduced first in \cite{MR733641}, and  later adapted to the locally convex setting. See for example \cite{MR2207702} and \cite{MR2240217} for some details.

\begin{Def} 
\label{Def:Qmorphisme}
A \emph{quasihomomorphism} between locally convex algebras $\mc{A}$ and $\mc{B}$ relative to $\hat{\mc{B}}$, denoted $(\alpha,\bar\alpha) \colon  \mc{A}\rrarrow \hat{\mc{B}}\trianglerighteq \mc{B}$, is given by a pair $(\alpha,\bar\alpha)$ of homomorphisms into a locally convex algebra $\hat{\mc{B}}$ that contains $\mc{B}$ such that $(\alpha-\bar\alpha)(\mc{A})\subseteq \mc{B}$, $\alpha(\mc{A})\mc{B}\subseteq \mc{B}$ and $\mc{B}\alpha(\mc{A})\subseteq \mc{B}$. 
\end{Def}

Note that the definition is symmetric in $\alpha$ and $\bar\alpha$. Also, the definition can be generalized, basically by supposing that $\alpha,\bar\alpha$ have image in the multipliers of $\mc{B}$, but we will not go into this. That quasihomomorphisms may be used as a tool to prove properties of $K$-theory goes back to \cite{MR750677}.
\begin{Prop}
\label{quasiprops}
Let $(\alpha,\bar\alpha):\mc{A}\rrarrow \hat{\mc{B}}\trianglerighteq \mc{B}$ be a quasihomomorphism, $H$ a split exact functor with values in the category of abelian groups. We have the properties
\begin{enumerate}
\item $(\alpha,\bar\alpha)$ induces a morphism, denoted $H(\alpha,\bar\alpha)$, from $H(\mc{A})$ to $H(\mc{B})$;
\item for every morphism $\phi \colon  \mc{D}\to \mc{A}$, $(\alpha\circ\phi,\bar\alpha\circ\phi)$ is a quasihomomorphism and:
$$H(\alpha,\bar\alpha)\circ H(\phi)=H(\alpha\circ\phi,\bar\alpha\circ\phi);$$
\item if $\phi:\hat{\mc{B}}\to \hat{\mc{C}}$ is a morphism and $\mc{C}\trianglelefteq\hat{\mc{C}}$ a subalgebra such that $(\phi\circ\alpha,\phi\circ\bar\alpha) \colon  \mc{A}\rrarrow\hat{\mc{C}}\trianglerighteq \mc{C}$ is a quasihomomorphism, then $H(\phi\circ\alpha,\phi\circ\bar\alpha)=H(\phi')\circ H(\alpha,\bar\alpha)$, where $\varphi'$ is the restriction of $\varphi$ to a morphism $\mc{B} \to  \mc{C}$;
\item \label{Pt:4} if $\mc{A}$ is generated as a locally convex algebra by a set $X$, $\mc{B}\trianglelefteq \hat{\mc{B}}$ a subalgebra and $\alpha,\bar\alpha \colon  \mc{A}\to \hat{\mc{B}}$ are two homomorphisms such that $(\alpha-\bar\alpha)(X)\subseteq \mc{B}$, then $(\alpha,\bar\alpha)$ is a quasihomomorphism $\mc{A}\rrarrow\hat{\mc{B}}\trianglerighteq \mc{B}$;
\item if $\alpha-\bar\alpha$ is a morphism and orthogonal to $\bar\alpha$, then $H(\alpha,\bar\alpha)=H(\alpha-\bar\alpha)$;
\item if $H$ is diffotopy-invariant, $(\alpha,\bar\alpha)\colon  \mc{A}\rrarrow Z\hat{\mc{B}}\trianglerighteq Z\mc{B}$  a quasihomomorphism, then the homomorphisms $H(ev_t\circ\alpha,ev_t\circ\bar\alpha)$ is independent of $t\in I$. 
\end{enumerate}
\end{Prop}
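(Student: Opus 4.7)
The framework for all six properties is the standard Cuntz construction. Given a quasihomomorphism $(\alpha,\bar\alpha)\colon \mc{A} \rrarrow \hat{\mc{B}} \trianglerighteq \mc{B}$ I would form the classifying subalgebra
$$\mc{E}(\alpha,\bar\alpha) = \bigl\{ (a,x) \in \mc{A} \oplus \hat{\mc{B}} \,\bigm|\, x - \bar\alpha(a) \in \mc{B} \bigr\}.$$
A short computation using $(\alpha-\bar\alpha)(\mc{A})\subseteq \mc{B}$ and the bilateral conditions $\alpha(\mc{A})\mc{B}, \mc{B}\alpha(\mc{A})\subseteq \mc{B}$ (which force $\bar\alpha(\mc{A})\mc{B},\mc{B}\bar\alpha(\mc{A})\subseteq\mc{B}$ as well) shows that $\mc{E}(\alpha,\bar\alpha)$ is a locally convex subalgebra and that $\{0\}\oplus \mc{B}$ sits in it as an ideal. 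The projection onto the first factor gives a split extension
$$0 \to \mc{B} \to \mc{E}(\alpha,\bar\alpha) \to \mc{A} \to 0$$
with two algebra sections $\sigma_{\bar\alpha}(a) = (a,\bar\alpha(a))$ and $\sigma_\alpha(a) = (a,\alpha(a))$. Applying the split exact functor $H$, the canonical section $\sigma_{\bar\alpha}$ realises $H(\mc{E}) = H(\mc{A}) \oplus H(\mc{B})$; since both $H(\sigma_\alpha)$ and $H(\sigma_{\bar\alpha})$ project to the identity on $H(\mc{A})$, their difference factors uniquely through $H(\mc{B})$, and I would take the resulting map $H(\mc{A})\to H(\mc{B})$ as the definition of $H(\alpha,\bar\alpha)$. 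This establishes (i).

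Properties (ii) and (iii) are then immediate naturality statements: a morphism $\phi\colon \mc{D}\to \mc{A}$ induces $\mc{E}(\alpha\circ\phi,\bar\alpha\circ\phi) \to \mc{E}(\alpha,\bar\alpha)$, $(d,x)\mapsto (\phi(d),x)$, which is compatible with both sections and restricts to the identity on the ideals, while $\phi\colon \hat{\mc{B}}\to \hat{\mc{C}}$ sends $\mc{E}(\alpha,\bar\alpha)$ into $\mc{E}(\phi\circ\alpha,\phi\circ\bar\alpha)$ and $\mc{B}$ into $\mc{C}$ via $\phi'$; both diagrams give the stated identities after applying $H$. For (v), when $\gamma := \alpha-\bar\alpha$ is a homomorphism orthogonal to $\bar\alpha$, a direct check shows $\sigma_\alpha = \sigma_{\bar\alpha} + \bar\gamma$, where $\bar\gamma(a) = (0,\gamma(a))$, and $\sigma_{\bar\alpha}$ and $\bar\gamma$ are mutually orthogonal homomorphisms into $\mc{E}$; the standard consequence of split exactness that $H$ is additive on orthogonal homomorphisms then yields $H(\sigma_\alpha)-H(\sigma_{\bar\alpha}) = H(\bar\gamma)$, and identifying this with $H(\gamma)$ after projection gives $H(\alpha,\bar\alpha) = H(\alpha-\bar\alpha)$. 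Property (vi) is obtained by composing with $\eval_t$: since $\eval_t$ is an algebra map $Z\hat{\mc{B}}\to\hat{\mc{B}}$ sending $Z\mc{B}$ into $\mc{B}$, (iii) gives $H(\eval_t\circ\alpha,\eval_t\circ\bar\alpha) = H(\eval_t)\circ H(\alpha,\bar\alpha)$, and diffotopy invariance makes $H(\eval_t)$ independent of $t$.

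I expect (iv) to be the main obstacle and the only part that genuinely requires work. We are given only $(\alpha-\bar\alpha)(X)\subseteq \mc{B}$ on a generating set $X$, and since $\mc{B}\trianglelefteq \hat{\mc{B}}$ is an ideal the bilateral conditions of Definition \ref{Def:Qmorphisme} come for free; the task is to propagate $(\alpha-\bar\alpha)(a)\in \mc{B}$ from $X$ to all of $\mc{A}$. My approach would be to consider
$$S := \{a\in \mc{A} \mid (\alpha-\bar\alpha)(a)\in \mc{B}\},$$
which is a linear subspace by linearity of $\alpha,\bar\alpha$ and a closed subset because $\mc{B}$ is closed in $\hat{\mc{B}}$ and both $\alpha,\bar\alpha$ are continuous. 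The Leibniz-type identity
$$(\alpha-\bar\alpha)(ab) = (\alpha-\bar\alpha)(a)\,\bar\alpha(b) + \alpha(a)\,(\alpha-\bar\alpha)(b),$$
together with $\mc{B}\trianglelefteq \hat{\mc{B}}$, makes $S$ multiplicatively closed. Hence $S$ is a closed subalgebra containing $X$, so equal to the locally convex algebra generated by $X$, namely $\mc{A}$. The delicate point is ensuring that the notion of generation used agrees with ``closure of polynomial expressions'', so that closedness and multiplicative closure really suffice to conclude $S = \mc{A}$.
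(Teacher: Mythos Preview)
Your approach is correct and matches the paper. The paper proves only (iv) explicitly, via the same Leibniz identity you use (written there in the form $\alpha(xx')-\bar\alpha(xx')=(\alpha(x)-\bar\alpha(x))\alpha(x')+\bar\alpha(x)(\alpha(x')-\bar\alpha(x'))$), and refers all remaining items to \cite{GrensingUniversal}, Proposition 23; your Cuntz classifying-algebra sketch for (i)--(iii), (v), (vi) is precisely that standard construction, and your worry about the meaning of ``generated as a locally convex algebra'' is not a real obstacle since it does mean the closure of the algebraic span.
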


\begin{proof}
To prove (iv), we let $\mc{A}$ be generated by $X$, and $x,x'\in X$; then $$\alpha(xx')-\bar\alpha(xx')=(\alpha(x)-\bar\alpha(x'))\alpha(x')+\bar\alpha(x')(\alpha(x')-\bar\alpha(x')).$$ 
The proof of the other well-known statements can be found for example in \cite{GrensingUniversal}, Proposition 23.
\end{proof}
For subsets $X$ and $Y$ in a locally convex algebra $\mc{B}$, we say $Y$ is $X$-stable if $XY,YX\subseteq Y$.
\begin{Def} 
Let $X,Y\subseteq \mc{B}$ be subsets. Then $XYX$ is defined as the smallest closed $X$-stable (hence locally convex) subalgebra of $\mc{B}$ containing $Y$.
\end{Def}
$XYX$ is obviously independent of the size (but not the topology) of the ambient algebra: if $\mc{B}\subseteq \mc{B}'$ as a closed subalgebra and $X,Y\subseteq \mc{B}$, then it doesn't matter if we intersect $X$-stable subalgebras of $\mc{B}$ or $\mc{B}'$. We denote by $X^+$ the set $X$ with a unit adjoint. We have the following obvious

\begin{Lem}
\label{triviallem} 
$XYX$ is the locally convex algebra generated by 
$$\left\{\sum_{finite}x_iy_ix_i' \middle|x_i,x_i'\in X^+,\;y_i\in Y\right\}$$
and thus depends only on the locally convex algebras $LC(X)$ and $LC(Y)$ generated in $B$ by $Y$ and $X$, respectively:
$$XYX=X LC(Y)X=LC(X)YLC(X).$$
\end{Lem}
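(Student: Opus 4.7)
The plan is to verify the claimed equality $XYX = A$, where $A$ denotes the locally convex subalgebra of $\mc{B}$ generated by the set
$$S = \left\{\sum_{\text{finite}} x_i y_i x_i' \;\middle|\; x_i, x_i' \in X^+,\; y_i \in Y\right\},$$
by showing that $A$ satisfies the defining minimality property of $XYX$, and vice versa.

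For the inclusion $A \subseteq XYX$, I would argue that each generator $x_i y_i x_i' \in S$ belongs to $XYX$: when $x_i = x_i' = 1$ this is just an element of $Y \subseteq XYX$; otherwise the $X$-stability of $XYX$ applied to $y_i \in Y$ places $x_i y_i x_i'$ in $XYX$. Since $XYX$ is a closed locally convex subalgebra containing $S$, it contains $A$.

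For the reverse inclusion $XYX \subseteq A$, I would appeal to the minimality in the definition of $XYX$: it suffices to check that $A$ is closed, contains $Y$, and is $X$-stable. Closedness and the containment of $Y$ (taking $x_i = x_i' = 1$) are immediate from the definition of $A$. The $X$-stability is the only step that requires care: by continuity and since $A$ is a subalgebra, it reduces to showing $xs, sx \in A$ for $x \in X$ and a generator $s = \sum x_i y_i x_i' \in S$. Expanding $xs = \sum (xx_i) y_i x_i'$, I would analyse the summands case by case --- the summand with $x_i = 1$ lies already in $S$ because $x \in X \subseteq X^+$, while the summand with $x_i \in X$ yields a product $xx_i$ of two $X$-factors, which is handled by rewriting $(xx_i) y_i x_i'$ as $x \cdot (x_i y_i x_i')$ and using that $A$, being a subalgebra, absorbs such products of its generators. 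Right $X$-stability is treated symmetrically.

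The equalities $XYX = X \cdot LC(Y) \cdot X = LC(X) \cdot Y \cdot LC(X)$ then follow at once from the same minimality principle: any closed subalgebra containing $Y$ automatically contains $LC(Y)$, and any closed $X$-stable subspace is automatically $LC(X)$-stable, so replacing $Y$ by $LC(Y)$ or $X$ by $LC(X)$ does not enlarge the class of closed $X$-stable subalgebras whose intersection defines $XYX$. The main obstacle in the proof is the $X$-stability check for $A$ in the case $x_i \in X$ above: the product $xx_i$ is not itself a single element of $X^+$, so one must justify that the result still lies in $A$ by exploiting that $A$ is generated as a subalgebra (and not merely as a subspace) by $S$; once this point is handled, the remainder of the argument is routine.
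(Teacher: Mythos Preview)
The paper does not prove this lemma --- it is introduced as ``obvious'' and left without argument --- so there is no proof to compare against. The gap you flag in your own proposal is real and, under the literal reading $X^+ = X \cup \{1\}$ given just before the lemma, unfillable. In the case $x_i \in X$ you rewrite $(x x_i) y_i x_i' = x \cdot (x_i y_i x_i')$ and invoke that $A$ is a subalgebra, but $x$ lies in $X$, not in $A$, so the subalgebra property yields nothing. Concretely, take $\mc{B}$ to be the free algebra on two letters $x,y$ with $X=\{x\}$, $Y=\{y\}$: then $x^2 y \in XYX$ (apply $X$-stability twice to $y$), yet $x^2 y$ is not in the subalgebra $A$ generated by $\{y, xy, yx, xyx\}$, since any product of $k$ of these generators contains exactly $k$ occurrences of $y$, forcing $x^2 y$ to lie in their linear span, which it does not.

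Your direct argument for the equalities $XYX = X\,LC(Y)\,X = LC(X)\,Y\,LC(X)$ is correct and does not rely on the first assertion; these equalities are what the paper actually uses downstream. The first assertion becomes true --- and your $X$-stability check then goes through verbatim --- if $X^+$ is read as the unital multiplicative monoid generated by $X$ (all finite products, including the empty one) rather than as $X \cup \{1\}$; this is presumably the intended meaning.
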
 

\begin{Def}
For a pair of morphisms $\alpha,\bar\alpha \colon  \mc{A}\to \hat{\mc{B}}$ we call the quasihomomorphism $(\alpha,\bar\alpha)  \colon 	\mc{A}\rrarrow \hat{\mc{B}}\trianglerighteq \mc{B}$ with $\mc{B}:=\alpha(\mc{A})(\alpha-\bar\alpha)(\mc{A})\alpha(\mc{A})$ the \emph{associated quasihomomorphism}. We call a quasihomomorphism \emph{minimal}, if it is associated to $\alpha,\bar\alpha \colon  \mc{A}\to \hat{\mc{B}}$.
\end{Def}
By the above Lemma, if $X$ generates $\mc{A}$, then $\alpha(X)(\alpha-\bar\alpha)(X)\alpha(X)=\alpha(\mc{A})(\alpha-\bar\alpha)(\mc{A})\alpha(\mc{A})$, using the notation of Lemma \ref{triviallem} above.

\section{The smooth Toeplitz algebra}
\label{Sec:stoe}

We recall the definition and properties of the smooth Toeplitz algebra as introduced in \cite{MR1456322}, compare Satz 6.1 therein:
\begin{Def}
\label{Def:ToepLisse}
 The \emph{smooth Toeplitz algebra} $\stoe$ is defined as the direct sum $\sco\oplus C^\infty(S^1)$ of locally convex vector spaces with multiplication induced from the inclusion into the $C^*$-Toeplitz algebra.
\end{Def}
It follows from \cite{MR1456322} that $\mc{T}^\infty$ is a nuclear locally convex algebra. We denote $S$ and $\bar S$ the generators of $\stoe$, set $e:=1-S\bar S$, and deduce the following fact from the proof of Lemma 6.2 in \cite{MR1456322}:

\begin{Lem}
\label{ahomotopy}
There exists a unital diffotopy $\phi_t$ such that
$$ \phi_t(S)=S(1- e)\otimes 1+f(t)(e\otimes S)+g(t)Se\otimes 1,$$
where $f,g$ are smooth functions such that $f(0)=0$, $f(1)=1$, $g(0)=1$ and $g(1)=0$ and all derivatives of $f$ and $g$ vanish in $0,1$.
\end{Lem}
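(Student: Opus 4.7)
The plan is to construct $\phi_t$ explicitly as a unital homomorphism $\stoe \to \stoe \otimes_\pi \stoe$ by specifying its value on the generator $S$ via the given formula, and then verifying the Toeplitz isometry relation $\phi_t(\bar S)\phi_t(S) = 1$. First I would choose smooth functions $f, g \colon [0,1] \to \R$ satisfying, in addition to the stated boundary and flatness conditions, the supplementary constraint $f(t)^2 + g(t)^2 = 1$ for all $t$; concretely, pick a smooth $h \colon [0,1] \to [0,1]$ with $h(0) = 0$, $h(1) = 1$, flat at both endpoints, and set $f(t) := \sin(\tfrac{\pi}{2} h(t))$, $g(t) := \cos(\tfrac{\pi}{2} h(t))$. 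The identity $f^2 + g^2 = 1$ turns out to be forced by the isometry relation, as the computation below reveals.

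Set $T_t := S(1-e) \otimes 1 + f(t)(e \otimes S) + g(t)(Se \otimes 1)$ in $\stoe \otimes_\pi \stoe$, with formal adjoint $\bar T_t := (1-e)\bar S \otimes 1 + f(t)(e \otimes \bar S) + g(t)(e\bar S \otimes 1)$. The central calculation is $\bar T_t T_t = 1 \otimes 1$. The identities to use are $\bar S S = 1$ together with $eS = 0 = \bar S e$ (both consequences of $e = 1 - S \bar S$), whence also $eSe = 0$. With these, every cross term in $\bar T_t T_t$ vanishes; for example
\begin{align*}
((1-e)\bar S \otimes 1)(e \otimes S) &= (1-e)\bar S e \otimes S = 0, \\
(e \otimes \bar S)(Se \otimes 1) &= eSe \otimes \bar S = 0,
\end{align*}
and analogously for the four other cross terms. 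The three surviving diagonal terms yield
\begin{align*}
\bar T_t T_t = (1-e) \otimes 1 + f(t)^2 (e \otimes 1) + g(t)^2 (e \otimes 1) = 1 \otimes 1.
\end{align*}

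The pair $(T_t, \bar T_t)$ therefore defines, by the universal property of the ambient $C^*$-Toeplitz algebra $\toe$, a unital $\ast$-homomorphism $\toe \to \toe \otimes \toe$. That the restriction to $\stoe \subset \toe$ takes values in the smooth projective tensor product $\stoe \otimes_\pi \stoe$ is extracted from the proof of Lemma 6.2 of \cite{MR1456322}, exploiting the splitting $\stoe = \sco \oplus C^\infty(S^1)$: the generators $T_t$ and $\bar T_t$ visibly lie in $\stoe \otimes_\pi \stoe$, and one must trace this through to all products. Since $t \mapsto T_t$ is a linear combination of fixed elements of $\stoe \otimes_\pi \stoe$ with scalar coefficients $1$, $f(t)$, $g(t)$ whose derivatives all vanish at $t = 0, 1$, the family $t \mapsto \phi_t$ assembles into a continuous homomorphism $\stoe \to Z(\stoe \otimes_\pi \stoe)$, i.e., a diffotopy in the sense of the paper; the endpoint values $\phi_0(S) = S \otimes 1$ and $\phi_1(S) = S(1-e) \otimes 1 + e \otimes S$ read off directly. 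The main delicate point, for which we appeal to \cite{MR1456322}, is the continuity of the extended $\phi_t$ as a map into the \emph{smooth} tensor product rather than merely the $C^*$-completion; the algebraic verification of the Toeplitz relation and the flatness of the scalar coefficients are straightforward.
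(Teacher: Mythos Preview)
Your proposal is correct and aligns with the paper's approach: the paper does not give an independent proof but simply states that the lemma is deduced from the proof of Lemma~6.2 in \cite{MR1456322}, and you likewise appeal to that reference for the delicate continuity into $\stoe \otimes_\pi \stoe$. Your explicit verification of the isometry relation $\bar T_t T_t = 1\otimes 1$ (which forces $f^2+g^2=1$) and the construction of $f,g$ via a smooth reparametrized rotation are exactly the standard ingredients behind Cuntz's argument, so you have supplied more algebraic detail than the paper itself while following the same route.
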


Thus $\phi_0:\stoe \to \stoe\otimes_\pi\stoe,\,x\mapsto x\otimes 1$ is the canonical inclusion into the first variable, and $\phi_1$ is the map determined by $S\mapsto \phi_0(S^2\bar S)+(1-S\bar S)\otimes S$.

To unclutter notation, we set $\phi_0(x)= x \otimes 1 =:\hat x$ and $e\otimes x=:\check x$ for all $x\in\stoe$. We also recall that that the smooth Toeplitz algebra fits into an extension 
$$\xymatrix{ 0\ar[r] &\sco\ar[r]& \stoe\ar[r]^-\pi& LC(U)\ar[r]&0}$$
where $LC(U)$ denotes the algebra $C^\infty(\mT)$ of smooth functions on the torus. The extension is linearly split by a continuous map $\rho:LC(U)\to \stoe$. The ideal is the  algebra $\sco$ of smooth compact operators, which is isomorphic (as a vector space) to $s\otimes s$, where $s$ is the space of rapidly decreasing sequences, and hence $\sco$ is nuclear. We caution the reader that the smooth compacts are not invariant under conjugation by unitaries if we view them as represented inside some $\mK$, where $\mK$ denotes the compacts on a separable, infinite dimensional Hilbert space. They are rapidly decreasing with respect to a \underline{choice} of base.

\section{Smooth generalized crossed products} 
\label{Sec:SmoothCPAlg}

In the following, a \emph{gauge action} $\gamma$ on a locally convex algebra $\mc{B}$ is an action of $S^1$ which is pointwise continuous. In this case, we denote by $\mc{B}^{(m)} $ for $m \in \Z$ the space of elements of \emph{degree} $m$, \textsl{i.e.},
$$ \mc{B}^{(m)} = \left\{ b \in \mc{B} \middle| \forall t \in S^1, \gamma_t(b) = e^{i 2 \pi m t} b \right\} .$$
We denote the degree $0$ part by $\mc{A}$ and the degree $1$ part by $\mc{E}$.

An element $b$ of $\mc{B}$ is called \emph{gauge smooth} if the map $t \mapsto \gamma_t(b)$ is smooth.

\begin{Def}
\label{Def:SPimsner}
A \emph{smooth Generalized Crossed Product} (GCP) $\mc{B}$ is a locally convex algebra equipped with an involution and a gauge action such that 
\begin{itemize}
\item
$\mc{B}^{(0)}$ and $\mc{B}^{(1)}$ generate $\mc{B}$ as a locally convex algebra;
\item
all $b \in \mc{B}$ are gauge smooth, and the induced map $\mc{B} \to C^\infty(S^1 \to \mc{B})$ is continuous.
\end{itemize}
\end{Def}

In the following, we sketch how to compare this definition with the $C^*$-case: our definition here focusses on the gauge action, thereby remains close to the notion of generalized crossed product for $C^*$-algebras as given by Abadie, Eilers and Exel in \cite{AbadieEE}. There is however a way to relate generalized crossed products to the so called Pimsner algebras (see \cite{MR1426840}) by using the work of Katsura \cite{Katsura02}. The techniques to be used in our proofs are inspired by Pimsner's approach. 

The essential difference between the viewpoints of  \cite{AbadieEE} and  \cite{MR1426840} is the following: Pimsner's construction focusses on the universal properties of the algebra, while  \cite{AbadieEE} emphasises the gauge action.  All Pimsner algebras have a gauge action, and taking the degree zero part for this action as base algebra results in a loss of information. More precisely, the data specified for a generalized crossed product suffices to construct a Pimsner algebra, which coincides with the generalized crossed product. If we start however with an algebra as constructed by Pimsner, taking the degree zero part as base algebra allows to represent it as a generalized crossed product (this is closely related to the change of coefficient algebra in section 3 of \cite{MR1426840}).

In a first version of this article \cite{ChernGrensingGabriel}, we had focused on Pimsner's description. The seemingly more intrinsic treatment results in technical complications and we have found that the additional complexity of the proofs was not justified by the applications we have in mind.

For Pimsner algebras as well as for generalized crossed products, the gauge action is pointwise continuous and therefore we obtain a smooth subalgebra to which our definition applies. For more details and a more general class of examples, see Remark \ref{Rem:ExGCP} below.

\begin{Rem}
\label{Rem:SPimsner}
It is easy to see that:
\begin{itemize}
\item
$\mc{A}=\mc{B}^{(0)}$ is an involutive subalgebra of $\mc{B}$ which we refer to as the \emph{base algebra} of $\mc{B}$.
\item
$\mc{E} = \mc{B}^{(1)}$ is an $\mc{A}$-bimodule. More generally, $\mc{B}^{(m)}$ is an $\mc{A}$-bimodule that we write as $\mc{E}^{(m)}$.
\item
If we denote by $\bar{\cdot }$ the involution on $\mc{B}$, the expressions $\xi \bar{\eta}$ and $\bar{\xi} \eta$ define ``$\mc{A}$-valued scalar products'' on $\mc{E}^{(m)}$.
\end{itemize}
\end{Rem}

\begin{Not}
In the following we will use the symbols $a$ and $\xi$ as typical notations for elements of $\mc{A}$ and $\mc{E}$, respectively. To distinguish between $\xi$ as element of the bimodule $\mc{E}$ and $\xi $ as element of $\mc{B}$, we denote by $S \colon \mc{E} \to \mc{B}$ the inclusion and abbreviate $S(\xi)$ by $S_\xi$.
\end{Not}

Our final result requires stronger hypotheses. To formulate it, say that a finite sequence $\eta_1^{(m)}, \ldots , \eta_n^{(m)}$ in $\Modl ^{(m)}$ is a frame for $\Modl ^{(m)}$ if
\begin{equation}
\label{Eqn:Repere}
\sum_{i =1}^n \eta_i^{(m)} \, \overline{\eta_i^{(m)}} = 1 .
\end{equation}

The condition \eqref{Eqn:Repere} implies in particular that $\mc{E}^{(m)}$ is ``full'' for the ``left scalar product'', \textsl{i.e.}, the span of all the $\xi \bar \eta$ is dense in (even equal to) $\mc{A}$. Finally, the definition of tame smooth GCP requires some notation: 
\begin{Not}
\label{Not:Xi}
If $\mc{B}$ is a smooth GCP and which has frames $(\eta_i^{(m)} )_i$ for every $\Modl  ^{(m)}$ with $0 \leqslant m$, we can sort the $\eta_i^{(m)}$ according to the lexicographical order of $(i,m)$. We then denote by $(\xi_l)_l$ the $\eta_i^{(m)}$ numbered according to this order.

For instance, if we have the sequences 
\begin{align*}
\eta^{(0)}_1 &= 1 \in \mc{A},
& 
& \left (\eta_i^{(1)} \right )_{i \in \{1, 2,3\}},
&
& \left (\eta_i^{(2)}\right )_{i \in \{1, 2,3\}},
&
&\ldots
\end{align*}
 then the sequence $(\xi_l)$ will be:
\begin{align*}
\xi_1 &= 1 \in \mc{A}
&
\xi_2 &= \eta_1^{(1)}
&
\xi_3 &= \eta_2^{(1)}
&
\xi_4 &= \eta_3^{(1)}
&
\xi_5 &= \eta_1^{(2)}
&
\xi_6 &= \eta_2^{(2)}
&
&\ldots 
\end{align*}
Notice that with this procedure, $(\xi_l)$ is a sequence which takes its values in $\bigcup_{ m \in \N} \mc{E}^{(m)}$ and not just in $\mc{E} $. 
 \end{Not}

\begin{Not}
\label{Not:MultiVect}
In order to simplify upcoming calculations we extend the definition of $S_\xi$ as follows: if $I\in \N^{n}$ is a multiindex, then we denote by $\xi_I$ the $n$-tuple $(\xi_{I_1},\ldots,\xi_{I_n})$ and set $S_{\xi_I}=S_{\xi_{I_1}}\cdots S_{\xi_{I_n}}$, $\bar S_{\xi_I} = \bar S_{\xi_{I_n}}\cdots \bar S_{\xi_{I_1}}$. We will refer to $\xi_I$ as an \emph{$\mc{E}$-multivector} and write $|\xi_I|:=|I|:=n$.
\end{Not}

\begin{Def}
\label{Def:PiCrGTame}
A \emph{tame smooth GCP} is a smooth GCP admitting frames in all degrees that satisfy
\begin{enumerate}[(i)]
\item \label{Pt:PiCrGTame1}
there is an integer $N$ such that for any $m \in \N$, the finite sequence $(\eta_i^m)_i$ has at least $1$ and at most $N$ elements;
\item \label{Pt:PiCrGTame2}
for any continuous seminorm $p$ on $\mc{B}$, the (real) sequences $\big( p\left( S_{\xi_l} \right) \big )_l $ and $\big ( p( \bar S_{\xi_l} ) \big )_l$ have polynomial growth in $l$.
\end{enumerate}
\end{Def}

\begin{Rem}
\label{Rem:CroissMod}
Two consequences of being tame are:
\begin{itemize}
\item
for any $l \in \N$, $l/N \leqslant |\xi_l| \leqslant l$;
\item 
for any continuous seminorm $q$ on $\Toep $, the (real) sequences $\left( q  \left (S^{|\xi_l|} \right) \right)_l$ and $\left( q \left( \bar S^{|\xi_l|} \right) \right)_l$ have polynomial growth in $l$.
\end{itemize}
\end{Rem}

\section{An extension of locally convex algebras}
\label{Sec:ExtAlgLC}

We assume throughout that $ \mc{A}$ is unital and complete.
\begin{Def}
\label{Def:ToepAlg}
If $\mc{B}$ is a smooth GCP, we define $\ToepTop $, the Toeplitz algebra of $\mc{B}$, as the closed subalgebra of $\mc{B}\otimes_\pi \stoe$ (completed projective tensor product) generated by 
$$\mc{A}\otimes 1,\;S(\mc{E})\otimes S,\;\bar S(\mc{E})\otimes \bar S.$$
We denote $\ToepTopAlg$ the algebra generated by the same generators, but without closure.
\end{Def}
The canonical inclusion will be denoted by $\iota \colon \mc{A}\to \ToepTop $.

\begin{Rem}
In the $C^*$-setting, the analogous construction obtained by taking the $C^*$-tensor product of a Pimsner algebra with the $C^*$-nuclear Toeplitz algebra, yields exactly Pimsner's $\Tt_E$ -- as can be seen by applying the gauge invariance theorem (see for example \cite{Katsura04}, section 6).
\end{Rem}

\begin{Prop}
\label{Prop:ExtLisse}
Let $\mc{B}$ be a smooth GCP and $\ToepTop $ the associated Toeplitz algebra. There is a linearly split extension:
\begin{equation}
\label{Eqn:ExtLisse}
\xymatrix{ 0\ar[r] &\Ker \overline{\pi}\ar[r]& \ToepTop \ar[r]^-{\overline{\pi}}& \mc{B} \ar[r]&0.}
\end{equation}
\end{Prop}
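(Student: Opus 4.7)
The plan is to define $\overline{\pi}$ and to construct a continuous linear splitting $\sigma$, from which exactness and linear splitness of \eqref{Eqn:ExtLisse} follow. For $\overline{\pi}$, I would exploit the continuous character $\chi := \eval_1 \circ \pi \colon \stoe \to \mC$ factoring through the symbol map $\pi \colon \stoe \to LC(U)$ of Section~\ref{Sec:stoe} and evaluation at $1 \in S^1$. Since $\chi$ is an algebra homomorphism with $\chi(S) = \chi(\bar S) = 1$, the restriction of $\id_\mc{B} \otimes \chi$ to $\ToepTop$ is a continuous homomorphism $\overline{\pi}$ sending $a \otimes 1 \mapsto a$, $S_\xi \otimes S \mapsto S_\xi$, and $\bar S_\xi \otimes \bar S \mapsto \bar S_\xi$.

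For the splitting, the smoothness of the gauge action (Definition~\ref{Def:SPimsner}) provides a continuous linear map $\tilde{\gamma} \colon \mc{B} \to \mc{B} \otimes_\pi LC(U)$ by $b \mapsto (t \mapsto \gamma_t(b))$. Using the linear splitting $\rho \colon LC(U) \to \stoe$ of the smooth Toeplitz extension --- which we may, by the vector-space decomposition $\stoe = \sco \oplus LC(U)$, arrange to satisfy $\rho(z^m) = S^m$ for $m \geq 0$ and $\rho(z^{-m}) = \bar{S}^m$ for $m \geq 1$ --- I would set $\sigma := (\id_\mc{B} \otimes \rho) \circ \tilde{\gamma}$. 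Then $\pi \circ \rho = \id$ gives $\chi \circ \rho = \eval_1$, so $\overline{\pi} \circ \sigma (b) = (\id \otimes \eval_1)\tilde\gamma(b) = \gamma_0(b) = b$.

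The main obstacle is to verify that $\sigma(\mc{B}) \subseteq \ToepTop$. Using the Fourier expansion of the smooth function $t \mapsto \gamma_t(b)$, one has $\tilde\gamma(b) = \sum_m b^{(m)} \otimes z^m$ convergent in $\mc{B} \otimes_\pi LC(U)$, whence
\[
\sigma(b) \;=\; \sum_{m \geq 0} b^{(m)} \otimes S^m \;+\; \sum_{m \geq 1} b^{(-m)} \otimes \bar{S}^m
\]
with convergence in $\mc{B} \otimes_\pi \stoe$. Since $\ToepTop$ is closed, it suffices to prove $\mc{B}^{(m)} \otimes S^m \subseteq \ToepTop$ for each $m \geq 0$, and symmetrically for $m \leq -1$. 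For this, combining the generation hypothesis with the bimodule relations $\bar S_\eta S_\xi, S_\xi \bar S_\eta \in \mc{A}$ and $\mc{A} \cdot \mc{E}, \mc{E} \cdot \mc{A} \subseteq \mc{E}$, any product of generators of $\mc{B}$ of degree $m \geq 0$ reduces --- by repeatedly absorbing adjacent $S \bar S$ or $\bar S S$ pairs into $\mc{A}$ and merging intervening $\mc{A}$-factors into $\mc{E}$ via the bimodule actions --- to the canonical form $S_{\eta_1} \cdots S_{\eta_m} a$ with $\eta_i \in \mc{E}$, $a \in \mc{A}$. Hence $\mc{B}^{(m)}$ is the closure of the linear span of such elements; and since $(S_{\eta_1} \otimes S) \cdots (S_{\eta_m} \otimes S)(a \otimes 1) = S_{\eta_1} \cdots S_{\eta_m} a \otimes S^m$ lies in $\ToepTop$ by construction, closedness yields $\mc{B}^{(m)} \otimes S^m \subseteq \ToepTop$. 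The symmetric argument for $m < 0$ uses the generators $\bar S(\mc{E}) \otimes \bar S$. Surjectivity of $\overline\pi$ and the linearly split exactness of~\eqref{Eqn:ExtLisse} follow.
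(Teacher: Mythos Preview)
Your proof is correct and follows essentially the same approach as the paper: both use the linear section $\rho$ of the smooth Toeplitz extension composed with the gauge map $\mc{B}\to\mc{B}\otimes_\pi LC(U)$ to build the splitting, and both rely on the same normal-form reduction of products of generators to verify that the splitting lands in $\ToepTop$. The paper merely separates this into two steps---first identifying $\im(\id\otimes\pi|_{\ToepTop})\simeq\mc{B}$ via $\eval_1$ and the gauge map, then checking that $\id\otimes\rho$ restricts appropriately---whereas you compose them from the outset.
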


\begin{proof}
We start from the linearly split exact sequence of locally convex algebras
$$
\xymatrix{ 0\ar[r] &\sco \ar[r]& \stoe \ar[r]^-{\pi}& LC(U) \ar[r]&0.}
$$
The definition of $\stoe $ entails that there is a linear splitting $\rho \colon LC(U) \to \stoe$. We can tensor this extension of locally convex algebras by $\mc{B}$. Since the initial extension is split, so is the resulting extension:
$$
\xymatrix{ 0\ar[r] & \mc{B} \TensTop  \sco \ar[r]& \mc{B} \TensTop \stoe \ar[r]& \mc{B}  \TensTop LC(U) \ar[r]&0.}
$$
Let $\overline{\pi}$ be the restriction of $\id \otimes \pi$ to $\ToepTop \subseteq \mc{B} \TensTop \stoe $. This map is continuous as a tensor product of two continuous maps and we get:
$$
\xymatrix{ 0\ar[r] &\Ker \overline{\pi}\ar[r]& \ToepTop \ar[r]^-{\overline{\pi}}& \im \overline{\pi} \ar[r]&0.}
$$
This sequence is linearly split because $\id \otimes \rho \colon \mc{B} \TensTop LC(U) \to  \mc{B} \TensTop \Toep$ restricts to a splitting $\overline{\rho}$ of $\overline{\pi}$ on $\im \overline{\pi}$.

To see this, denote by $\Delta$ the closed subalgebra of $\mc{B} \otimes LC(U)$ generated by $a \otimes 1$, $S_\xi \otimes U$ and $\bar S_\xi \otimes \bar U$ for $a \in \mc{A}$ and $\xi \in \Modl $. It is clear that $\overline{\pi}(\ToepTopAlg) \subseteq \Delta$. Since $\overline{\pi}$ is continuous and $\Delta$ is closed, we actually get $\overline{\pi}(\ToepTop) \subseteq \Delta$.

Using the left and right ``scalar products'' of Remark \ref{Rem:SPimsner}, any finite product of generators can be written either $ S_{\xi_1} \cdots S_{\xi_k} \otimes U^k $ if $k > 0$ or $\bar S_{\xi_1} \cdots \bar S_{\xi_{-k}} \otimes \bar U^{-k}$ if $k <0$ or $a \otimes 1$ if $k = 0$.

In any case, the splitting $\overline{\rho}$ send this product to an element of $\ToepTop  $ -- either $ S_{\xi_1} \cdots S_{\xi_k} \otimes S^k $ if $k > 0$ or $\bar S_{\xi_1} \cdots \bar S_{\xi_{-k}} \otimes \bar S^{-k}$ if $k <0$ or $a \otimes 1$ if $k = 0$. Since on the one hand, $\overline{\rho}$ is continuous and $\ToepTop $ is closed, and on the other hand the finite sums of finite products of generators are dense in $\Delta$ (by definition), it appears that $\overline{\rho}(\Delta ) \subseteq \ToepTop $. Consequently $\Delta = \im \overline{\pi}$, and in particular, $\im \overline{\pi}$ is closed.

Let us now prove that $\im \overline{\pi}$ is isomorphic to $\mc{B}$. The restriction of the evaluation map 
$$ev_1 \otimes \id \colon \mc{B} \TensTop LC(U) \longrightarrow  \mc{B} \TensTop \mC \simeq \mc{B},$$
where $ev_1(f) = f(1)$, clearly yields a continuous morphism $\im \overline{\pi} \to \mc{B}$.

\smallbreak

The Definition \ref{Def:SPimsner} of a smooth GCP entails that $t \mapsto \gamma_t(b)$ defines a continuous map $\mc{B} \to C^\infty(S^1 \to \mc{B})$. One can identify
$$C^\infty(S^1 \to \mc{B}) \simeq \mc{B} \TensTop LC(U)$$
(see \cite{CyclicHom}, p.62), hence we get a continuous map
$$\mc{B} \to \mc{B} \TensTop LC(U) .$$
It is readily checked that these maps are inverse to one another. The density of generators hence implies that $\im \overline{\pi}$ is isomorphic to $\mc{B}$ as locally convex algebras.
\end{proof}

\section{Morita equivalence of the kernel and base algebras}
\label{MoritaContext}

\begin{Prop}
\label{Prop:Kernel}
The kernel $\mc{C} := \Ker \overline{\pi}$ of \eqref{Eqn:ExtLisse} is the locally convex algebra generated by
$\mc{A}\otimes e$, $S(\mc{E})\otimes Se$ and $\bar S(\mc{E})\otimes e\bar S$ in $\mc{B} \TensTop \stoe$.
\end{Prop}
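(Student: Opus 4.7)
The plan is to establish the equality by double inclusion.

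For $\mc{C} \subseteq \Ker \overline{\pi}$, observe that each of the three generators lies in $\mc{B} \TensTop \sco \subseteq \ker(\id \otimes \pi)$, since $e \in \sco = \ker \pi$. It remains to exhibit each generator inside $\ToepTop$. Choosing a frame $(\eta_i)_i$ for $\mc{E}^{(1)}$, the resolution of unity $\sum_i S_{\eta_i}\bar S_{\eta_i} = 1_\mc{A}$ yields
$$1 \otimes S\bar S \;=\; \sum_i (S_{\eta_i} \otimes S)(\bar S_{\eta_i} \otimes \bar S) \;\in\; \ToepTop,$$
whence $a \otimes e = (a \otimes 1) - (a \otimes 1)(1 \otimes S\bar S) \in \ToepTop$, and similar arguments, using $Se = S - S \cdot S\bar S$ and $e\bar S = \bar S - S\bar S \cdot \bar S$, place $S_\xi \otimes Se$ and $\bar S_\xi \otimes e\bar S$ in $\ToepTop$ as well. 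The closed subalgebra $\mc{C}$ they generate is thus contained in $\Ker \overline{\pi}$.

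For the reverse inclusion, the plan is to exploit the linear splitting $\overline{\rho}$ of the extension \eqref{Eqn:ExtLisse}. Every $T \in \ToepTop$ admits the canonical decomposition $T = \overline{\rho}\overline{\pi}(T) + (T - \overline{\rho}\overline{\pi}(T))$, and for $T \in \Ker \overline{\pi}$ only the remainder survives. By density, I reduce to a monomial $M = b_M \otimes w_M$ in the three generators of $\ToepTop$, with $b_M \in \mc{B}$ of some degree $d$ and $w_M$ a word in $S, \bar S$ which simplifies in $\stoe$ (via $\bar S S = 1$) to some $S^P \bar S^Q$ with $P - Q = d$. Applying the $\stoe$-identity
$$S^P \bar S^Q \;=\; S^{P-Q} \;-\; \sum_{k=0}^{\min(P,Q)-1} S^{P-Q+k}\, e\, \bar S^k,$$
the monomial splits into $\overline{\rho}\overline{\pi}(M)$ plus remainders of the form $c \otimes S^i e \bar S^j$ with $c \in \mc{B}^{(i-j)}$. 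It then remains to recognise each such remainder as an element of $\mc{C}$: I would iterate the frame expansions in each $\mc{E}^{(m)}$ to write $c$ as a finite sum of products $S_{\xi_1}\cdots S_{\xi_i} \cdot a \cdot \bar S_{\eta_1}\cdots \bar S_{\eta_j}$, and then reassemble $c \otimes S^i e \bar S^j$ as a product of generators of $\mc{C}$ using the matrix-unit calculus of $\stoe$ under the identifications $e = e_{0,0}$, $S^n e = e_{n,0}$, $e\bar S^n = e_{0,n}$ and $e_{n,m} e_{p,q} = \delta_{m,p}\, e_{n,q}$. The closedness of $\mc{C}$ in $\mc{B} \TensTop \stoe$ then delivers $T \in \mc{C}$.

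The main obstacle I expect is precisely this last combinatorial step: the three given types of generators of $\mc{C}$ involve only the matrix units $e_{0,0}, e_{1,0}, e_{0,1}$ in their $\stoe$-component, so exhausting all $e_{i,j}$ needed to describe arbitrary remainders requires a careful interleaving of frame-expanded $\mc{B}$-factors with chained products of these generators, convergent in the projective tensor topology. Controlling both the $\mc{B}$-side bookkeeping (to reconstruct $c$) and the simultaneous $\stoe$-side collapse (to land on a single $S^i e \bar S^j$) is the delicate point; tameness of the GCP, providing frames in every degree, is the structural feature that makes it possible.
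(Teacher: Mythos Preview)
Your overall strategy coincides with the paper's: the easy inclusion is immediate, and for the reverse one uses the linear splitting $\overline\rho$ together with an approximation by $x_0\in\ToepTopAlg$, observing that $x_0-\overline\rho\,\overline\pi(x_0)$ is both in the kernel and close to $x$ (by continuity of $\overline\rho,\overline\pi$). The paper carries out exactly this $\varepsilon$-argument.

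There is, however, a genuine obstruction in your final ``reassembly'' step, and it is not a matter of delicate bookkeeping. The three generator types have $\stoe$-components $e=e_{00}$, $Se=e_{10}$, $e\bar S=e_{01}$; since $e_{ij}e_{kl}=\delta_{jk}e_{il}$, any nonzero product of these is some $e_{ij}$ with $i,j\in\{0,1\}$. Hence the closed \emph{subalgebra} they generate never contains an element with $\stoe$-part $S^ie\bar S^j$ for $i\geqslant 2$ or $j\geqslant 2$, regardless of how one interleaves frame expansions on the $\mc{B}$-side. So the programme of ``chained products of these generators'' collapsing to a prescribed $S^ie\bar S^j$ cannot succeed. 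The paper does not attempt this: it instead observes that $x_0-\overline\rho\,\overline\pi(x_0)$ lies in the two-sided \emph{ideal} of $\ToepTop$ generated by $1\otimes e$, and this ideal (rather than the bare subalgebra described in the statement) is what is actually used throughout the section --- for instance the elements $\Xi_i=S_{\xi_i}\otimes S^{|\xi_i|}e$ of Notation~\ref{Not:Xi} live there. Your ``careful interleaving'' should therefore be with the generators of $\ToepTop$ (i.e.\ you are describing an ideal), not with the three generators alone. As a minor point, your forward inclusion already invokes a frame for $\mc{E}^{(1)}$, a tameness hypothesis not present in the Proposition; the paper simply declares this inclusion ``clear''.
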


\begin{proof}
It is clear that the algebra $\mc{C}$ generated by $\mc{A}\otimes e$, $S(\mc{E})\otimes Se$ and $\bar S(\mc{E})\otimes e\bar S$ is contained the kernel. As the tensored sequence 
$$
\xymatrix{0\ar[r] & \mc{B} \otimes_\pi\sco\ar[r]& \mc{B} \otimes_\pi\stoe\ar[r]^-{\id_{\mc{B}} \otimes\pi}& \mc{B} \otimes_\pi LC(U)\ar[r]&0}
$$
stays exact, $\Ker(\overline{\pi})=\Ker(\id_{\mc{B}} \otimes \pi)\cap \ToepTop $.

Now let $x\in \Ker(\overline{\pi})$, $\ee>0$ and fix a continuous seminorm $p$ on $\ToepTop $. If $\overline{\rho}$ denotes again the splitting as in the proof of Proposition \ref{Prop:ExtLisse}, we can choose a continuous seminorm $q$ such that there is $c_{\overline{\rho}}$ with $p(\overline{\rho}(y))\leqslant c_{\overline{\rho}} \, q(y)$, and take a continuous seminorm $p'$ on $\ToepTop $ such that $q(\overline{\pi}(x'))\leqslant c_{\overline{\pi}} \, p'(x')$ for all $x'$. As $x\in \ToepTop $, we may choose a finite sum $x_0$ in $\ToepTopAlg$ with $p(x-x_0),p'(x-x_0)<\ee$. Then $x_0-\overline{\rho}(\overline{\pi}(x_0))\in\Ker(\overline{\pi})$ and
$$p(x-(x_0-\overline{\rho}(\overline{\pi}(x_0))))\leqslant p(x-x_0)+p(\overline{\rho}(\overline{\pi}(x-x_0)))\leqslant (1+c_{\overline{\rho}}c_{\overline{\pi}})\ee.$$ 
Now $x_0-\overline{\rho}(\overline{\pi}(x_0))$ is easily seen to be an element in the ideal generated by $1\otimes e$ in $\ToepTop $, and thus the other inclusion is proved.
\end{proof}

\begin{Rem}
A consequence of this Lemma is that $\mc{C}$ is included in the algebra $\mc{B} \TensTop \sco$. Every $x \in \mc{B} \TensTop \sco$ (and hence every $x \in \mc{C}$) has a representation of the form $\sum_{k,l} x_{k,l} \otimes S^k e \overline{S}^l$, where the $x_{k,l}$ are rapidly decreasing in $k,l$.
\end{Rem}

To go further in the analysis of the kernel, we rely on the algebra being tame. 
\begin{Not}
\label{Not:Xi}
Let $\xi_i$ be the sequence of $\mc{E}$-multivectors of the Definition \ref{Def:PiCrGTame}. We set
\begin{align*}
\Xi_i&:=S_{\xi_i}\otimes S^{|\xi_i|}e &
&\text{ and }
&
\bar\Xi_i&:= \bar S_{\xi_i}\otimes e{\bar S}^{|\xi_i|}.
\end{align*}
\end{Not}

\begin{Lem}
\label{into} 
For any $x \in \mc{C}$, the sequence $\left ( \bar\Xi_i x\Xi_j \right )_{i,j}$ is rapidly decreasing. Moreover, $\bar\Xi_i x\Xi_j \in \phi(\mc{B})$ for any $i,j$,  where $\phi \colon \mc{B} \to \mc{B} \TensTop \sco$ is given by $\phi(x) = x \otimes e$.
\end{Lem}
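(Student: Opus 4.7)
The plan is to make the kernel $\mc{C}$ explicit enough to compute $\bar\Xi_i x \Xi_j$ essentially as a matrix entry, and then control its size using tameness.

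First, I would use the Remark immediately preceding the Lemma: any $x \in \mc{C} \subseteq \mc{B}\TensTop \sco$ admits an expansion
\[ x = \sum_{k,l \geqslant 0} x_{k,l} \otimes S^k e \bar S^l ,\]
with coefficients $x_{k,l} \in \mc{B}$ that are rapidly decreasing in $(k,l)$ with respect to every continuous seminorm on $\mc{B}$. (The point is that $\{S^k e \bar S^l\}_{k,l}$ is the canonical matrix-unit basis of $\sco$, so this really is the $\sco$-expansion of $x$.)

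Next I would record the elementary Toeplitz identity
\[ \bigl(e \bar S^{m}\bigr)\bigl(S^k e \bar S^l\bigr)\bigl(S^{n} e\bigr) = \delta_{m,k}\,\delta_{n,l}\, e ,\]
which is immediate since $e \bar S^{m} = |0\rangle\langle m|$ and $S^{n}e = |n\rangle\langle 0|$ in the Fock picture. Plugging the expansion of $x$ into $\bar\Xi_i x \Xi_j$ and using this identity with $m = |\xi_i|$ and $n = |\xi_j|$ collapses the double sum to a single term:
\[ \bar\Xi_i x \Xi_j \;=\; \bigl(\bar S_{\xi_i}\, x_{|\xi_i|,|\xi_j|}\, S_{\xi_j}\bigr) \otimes e \;=\; \phi\bigl(\bar S_{\xi_i}\, x_{|\xi_i|,|\xi_j|}\, S_{\xi_j}\bigr) . \]
This already settles membership in $\phi(\mc{B})$, since $\bar S_{\xi_i}\, x_{|\xi_i|,|\xi_j|}\, S_{\xi_j} \in \mc{B}$.

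It remains to show rapid decay of $\bigl(\bar S_{\xi_i}\, x_{|\xi_i|,|\xi_j|}\, S_{\xi_j}\bigr)_{i,j}$ in $\mc{B}$. Given a continuous seminorm $p$ on $\mc{B}$, continuity of multiplication furnishes a continuous seminorm $q$ with
\[ p\bigl(\bar S_{\xi_i}\, x_{|\xi_i|,|\xi_j|}\, S_{\xi_j}\bigr) \;\leqslant\; q(\bar S_{\xi_i})\, q(x_{|\xi_i|,|\xi_j|})\, q(S_{\xi_j}) .\]
By Definition \ref{Def:PiCrGTame}(ii), the factors $q(\bar S_{\xi_i})$ and $q(S_{\xi_j})$ grow at most polynomially in $i$ and $j$, while $q(x_{k,l})$ is rapidly decreasing in $(k,l)$. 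Since Remark \ref{Rem:CroissMod} gives $|\xi_i| \geqslant i/N$ and $|\xi_j| \geqslant j/N$, the rapid decrease of $q(x_{|\xi_i|,|\xi_j|})$ in $(i,j)$ beats the polynomial factors, yielding rapid decay of $\bar\Xi_i x \Xi_j$ in $\phi(\mc{B})$ (equivalently in $\mc{B}$).

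The only subtle point I expect is the first step — one has to be confident that the purely algebraic expansion of an element of $\mc{C}$ in the basis $\{S^k e \bar S^l\}$ really converges in the projective-tensor topology with rapidly decreasing scalar coefficients in $\mc{B}$. This is precisely what the preceding Remark (itself a consequence of $\mc{C} \subseteq \mc{B}\TensTop \sco$ and the description of $\sco\TensTop \mc{B}$ as rapidly decreasing $\mc{B}$-valued matrices, see the Preliminaries) is designed to provide, so once this is invoked the rest is routine.
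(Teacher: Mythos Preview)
Your proposal is correct and follows essentially the same route as the paper: expand $x$ in the matrix-unit basis $S^k e\bar S^l$, use the Fock identity to collapse $\bar\Xi_i x\Xi_j$ to $\bar S_{\xi_i}x_{|\xi_i|,|\xi_j|}S_{\xi_j}\otimes e$, and then combine polynomial growth of the frame with rapid decay of the coefficients via the bound $|\xi_i|\geqslant i/N$. The only cosmetic differences are that the paper first treats finite sums and extends by continuity, and spells out an explicit ``increasing'' polynomial $P(k,l)$ to mediate the change of variables $i\mapsto |\xi_i|$, whereas you invoke continuity of multiplication (introducing an auxiliary seminorm $q$) and argue the absorption more qualitatively; both are fine.
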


\begin{proof}
Let $x\in  \mc{C}$, and assume first that $x=\sum_{k,l} x_{k,l}\otimes S^ke \bar S^l$ is a finite sum. 
\begin{align*}
\bar\Xi_ix\Xi_j&=\sum_{k,l} \bar S_{\xi_i}x_{k,l} S_{\xi_j}\otimes e{\bar S}^{|\xi_i|}S^ke{\bar S}^{l} S^{|\xi_j|}e\\
&=\sum_{k,l} \bar S_{\xi_i}x_{k,l} S_{\xi_j}\otimes \delta_{|\xi_i|,k}\delta_{l,|\xi_j|}e\\
&=\bar S_{\xi_i}x_{|\xi_i|,|\xi_j|} S_{\xi_j}\otimes e,
\end{align*}
which is clearly an element of $\phi( \mc{B})$. This property extends by continuity to $\mc{C}$.

We now need to check that $\big( \bar S_{\xi_i} x_{|\xi_i|,|\xi_j|} S_{\xi_j} \big)_{i,j}$ is rapidly decreasing in $i$ and $j$. This can be seen using the fact that $x_{k,l}$ is rapidly decreasing and the polynomial growth of $\bar S_{\xi_i}$, $S_{\xi_j}$.

More precisely, take a polynomial $R(i,j)$ and fix a continous seminorm $p$ on $\mc{B}$. We prove that $p \left(  \bar S_{\xi_i} x_{|\xi_i|,|\xi_j|} S_{\xi_j} R(i,j) \right)$ is bounded above. Since we are considering a tame algebra, $p(\bar S_{\xi_i}) p(S_{\xi_j})$ is bounded by a polynomial $Q_0(i,j)$. In turn, we can give a bound for $|Q_0(i,j) R(i,j)|$ by using an ``increasing'' polynomial $Q(i,j)$. By ``increasing'', we mean that
$$  i \leqslant i' \text{ and } j \leqslant j' \implies Q(i,j) \leqslant Q(i',j') .$$
Using the relations $|X^p Y^q| \leqslant X^{2 p} + Y^{2 q}$, it follows that $|Q_0(i,j) R(i,j)|$ has upper bound $i^{d_1} + j^{d_2} + K$, for $d_1, d_2$ and $K$ large enough. Consider now
$$P(k,l) = \sum_{k',l' = 1}^{N} Q(N(k-1) +k', N(l - 1) + l').$$ 
This is clearly an increasing polynomial in the above sense.

Since $\big( p(x_{k,l}) \big)_{k,l}$ is rapidly decreasing in $k,l$, there is an upper bound $C$ such that $ p(\xi_{k,l}) P(k,l) \leqslant C $, for all $k,l \in \N$. Let us estimate
$$
 p \left( \bar S_{\xi_i} x_{|\xi_i|,|\xi_j|} S_{\xi_j} \right) \leqslant p(x_{|\xi_i|, |\xi_j|}) Q(i,j) \leqslant p(\xi_{k,l}) P(k_0,l_0) \leqslant p(\xi_{k,l}) P(k,l) \leqslant C
$$
where $k_0$ and $l_0$ are the integer parts of $i / N$ and $j/N$, respectively, $k = |\xi_i|$ and $l = |\xi_j|$. It follows from Remark \ref{Rem:CroissMod} that $k_0 \leqslant k$. By design, $Q(i,j) \leqslant P(k_0,l_0)$, which proves the second inequality. $P$ is increasing and this proves the third inequality.
\end{proof}

\begin{Lem}
\label{approx}
If $\mc{B}$ is a tame smooth GCP, then for any $x \in \mc{B} \TensTop \sco$,
\begin{enumerate}[(i)]
\item
for any $I \in \N$, $\sum_{i=1} ^I \Xi_i \bar \Xi_i x$ is an element of $\ToepTop $;
\item
$\sum_{i=1} ^I \Xi_i \bar \Xi_i x \xrightarrow{I \to \infty} x$ in $\ToepTop $.
\end{enumerate}
\end{Lem}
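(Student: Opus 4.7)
The plan is to reduce both statements to explicit computations in $\sco$ combined with the frame identities of Definition \ref{Def:PiCrGTame}.

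First, observe that both $\Xi_i$ and $\bar\Xi_i$ actually lie in $\ToepTop $. Indeed, one writes $\Xi_i = (S_{\xi_i} \otimes S^{|\xi_i|}) \cdot (1 \otimes e)$, the first factor being a product of $|\xi_i|$ generators of the form $S_\xi \otimes S$. For the second factor, the (finite, by tameness) frame of $\mc{E}^{(1)} = \mc{E}$ yields
\[ 1 \otimes S\bar S = \sum_j (S_{\eta_j^{(1)}} \otimes S)\,(\bar S_{\eta_j^{(1)}} \otimes \bar S) \in \ToepTop ,\]
so that $1 \otimes e = 1 - 1 \otimes S\bar S \in \ToepTop$. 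A symmetric argument gives $\bar\Xi_i \in \ToepTop$. Hence, whenever $x$ itself lies in $\ToepTop$ --- the setting in which this lemma will be applied, since by Proposition \ref{Prop:Kernel} the relevant elements live in $\mc{C} = \ToepTop \cap (\mc{B}\TensTop\sco )$ --- each finite sum $\sum_{i=1}^I \Xi_i \bar\Xi_i\, x$ is a sum of products of elements of $\ToepTop$, which proves (i).

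For (ii), expand $x = \sum_{k,l} x_{k,l} \otimes S^k e \bar S^l$ with $(x_{k,l})$ rapidly decaying in $\mc{B}$. Since $p_n := S^n e \bar S^n$ is the rank-one projection onto the $n$-th basis vector, the identity $p_n \cdot S^k e \bar S^l = \delta_{n,k}\, S^n e \bar S^l$ in $\sco$ gives
\[ \Xi_i \bar\Xi_i\, x \;=\; \sum_l S_{\xi_i} \bar S_{\xi_i}\, x_{|\xi_i|,l} \otimes S^{|\xi_i|} e \bar S^l . \]
Summing over $1\leqslant i \leqslant I$ and grouping by the common value $n = |\xi_i|$ yields
\[ \sum_{i=1}^I \Xi_i \bar\Xi_i\, x \;=\; \sum_n \Bigl(\sum_{\substack{1\leqslant i\leqslant I \\ |\xi_i|=n}} S_{\xi_i} \bar S_{\xi_i}\Bigr) \sum_l x_{n,l} \otimes S^n e \bar S^l . \]
By the frame property \eqref{Eqn:Repere} together with Remark \ref{Rem:CroissMod} (which ensures that the whole frame of $\mc{E}^{(n)}$ is exhausted once $I \geqslant Nn$), the inner sum equals $1$ as soon as $I$ is large enough relative to $n$.

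To conclude, I split the partial sum into ``full-frame'' contributions (those $n$ for which the inner sum is already $1$) and ``boundary'' contributions (a bounded range of $n$ near the cut-off). The full-frame part converges to $\sum_{n,l} x_{n,l} \otimes S^n e \bar S^l = x$ in every seminorm of $\mc{B}\TensTop\sco$ by the rapid decay of $(x_{k,l})$; this is the desired convergence in $\ToepTop$. The boundary contribution is estimated seminorm by seminorm: the polynomial growth of $p(S_{\xi_i} \bar S_{\xi_i})$ from tameness (Definition \ref{Def:PiCrGTame}, condition (ii)) is dominated by the super-polynomial decay of $p(x_{k,l})$, so this contribution tends to zero. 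I expect the main difficulty to be precisely this boundary estimate --- uniformly controlling the product of tameness-type polynomial weights against the rapid decay of $x$ across all continuous seminorms --- in the same spirit as the matrix estimate carried out in Lemma \ref{into}.
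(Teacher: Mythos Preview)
Your proposal is essentially correct and, in one respect, cleaner than the paper's own argument. Two remarks:

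\textbf{On part (i).} Your restriction to $x\in\mc{C}=\ToepTop\cap(\mc{B}\TensTop\sco)$ is not merely a convenience but a necessity: for a general $x\in\mc{B}\TensTop\sco$ the conclusion of (i) (and a fortiori of (ii), which asserts convergence \emph{in} $\ToepTop$ to $x$) fails. Indeed, take $x=b\otimes e$ with $b\in\mc{E}^{(1)}$; then $\Xi_1\bar\Xi_1 x=b\otimes e$, which lies outside $\ToepTop$ since elements of $\ToepTop$ are fixed by the anti-diagonal gauge action on $\mc{B}\TensTop\stoe$. The paper's statement is therefore sloppy, and its one-line justification of (i) (``the formula shows it lies in $\ToepTop$'') tacitly uses the same restriction. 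One small imprecision in your argument: $S_{\xi_i}\otimes S^{|\xi_i|}$ is not literally a product of $|\xi_i|$ generators $S_\xi\otimes S$, since $\xi_i\in\mc{E}^{(m)}$ need not factor through $\mc{E}^{(1)}$. You should invoke the degree-one frame once more, writing $\xi_i=\sum_j\eta_j^{(1)}(\bar\eta_j^{(1)}\xi_i)$ and iterating, to express $S_{\xi_i}\otimes S^{|\xi_i|}$ as a \emph{finite sum} of such products.

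\textbf{On part (ii).} Here your route genuinely differs from the paper's. The paper first shows that the double series $\sum_{i,n}S_{\xi_i}\bar S_{\xi_i}x_{|\xi_i|,n}\otimes S^{|\xi_i|}e\bar S^n$ is absolutely summable (rapid decay in $(i,n)$, via the same polynomial-versus-rapid-decay trade-off as in Lemma~\ref{into}), and then identifies the limit as $x$ by evaluating the matrix-coefficient linear forms. You instead compare the partial sum directly to $x$: grouping by $n=|\xi_i|$ and using the lexicographic ordering, at each stage $I$ there is at most one degree $n_0$ whose frame is only partially summed, so your ``boundary'' is a single $n_0\geqslant I/N$ term (controlled by tameness against the decay of $x_{n_0,\cdot}$), while the ``full-frame'' part is a tail-truncation of $x$ itself. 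Your organisation is more direct and avoids the separate limit-identification step; the paper's has the mild advantage of packaging the estimate exactly as in Lemma~\ref{into}. Either way the analytic core is the same.
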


\begin{proof}
Let $x = \sum_{m,n} x_{m,n} \otimes S^m e \bar S^n$, we calculate:
 \begin{align*}
\left(\sum_{i=0}^I\Xi_i\bar\Xi_i \right)X=&\sum_{m,n=0}^\infty \sum_{i=0}^I S_{\xi_i}\bar S_{\xi_i}x_{m,n}\otimes S^{|\xi_i|}e\bar S^{|\xi_i|}S^me\bar S^n\\
=& \sum_{i=0}^I \sum_{n=0}^\infty S_{\xi_i}\bar S_{\xi_i} x_{|\xi_i|,n}\otimes S^{|\xi_i|} e\bar S^{n},
\end{align*}
which proves that $\sum_{i =1}^I \Xi_i \bar \Xi_i X $ is in $\ToepTop $, as long as the sum converges.

\smallbreak

We know that $x_{m,n}$ is rapidly decreasing in $m,n$. We need to show that the sequence $ S_{\xi_i}\bar S_{\xi_i} x_{|\xi_i|,n}\otimes S^{|\xi_i|} e\bar S^{n}$ is rapidly decreasing $i$ and $n$.

We use the same techniques as in Lemma \ref{into}: we fix a continuous seminorm $p$ and a polynomial $R(i,n)$. We can majorise the sequence
$$\left|R(i,n) p(\Xi_i) p(\bar \Xi_i) \right|$$
by a polynomial $Q(i,n)$, which we can choose increasing in the previous sense (see the proof of Lemma \ref{into}). 

Since $x_{m,n}$ is rapidly decreasing and  $S^m e \bar S^n$ has polynomial growth in $m,n$, the sequence $\left( x_{m,n} \otimes S^m e \bar S^n \right)_{m,n}$ is rapidly decreasing. Considering the polynomial $ P(m,n) = \sum_{m' =1}^N Q(N(m-1) + m') $, we can find a constant $C$ such that for any $m,n \in \N$, $ p \big( x_{m,n} \otimes S^m e \bar S^n \big)P(m,n) \leqslant C$. As a consequence, we get the inequalities
$$
p\big(\Xi_i \bar \Xi_i (x_{|\xi_i|,n} \otimes S^{|\xi_i|} e \bar S^n) \big) \leqslant p\big( x_{|\xi_i|,n} \otimes S^{|\xi_i|} e \bar S^n \big) P(m_0,n) \leqslant p\big( x_{m,n} \big) P(m,n) \leqslant C,$$
using the same arguments as in Lemma \ref{into}. Therefore, the sequence in $I \in \N$
$$ \left( \sum_{i=0}^I \sum_{n=0}^\infty S_{\xi_i}\bar S_{\xi_i} x_{|\xi_i|,n}\otimes S^{|\xi_i|} e\bar S^{n} \right)_I $$
is nothing but the partial sums of a summable series. Hence, it admits a finite limit and we will prove that this limit is $x$. 

For any degree $m \in \N$, the condition \eqref{Eqn:Repere} shows that there are two integers $l_1$ and $l_2$ such that $\sum_{i = l_1}^{l_2} S_{\xi_i} \bar S_{\xi_i} = 1_\mc{A}$. Algebraically, we see that for any fixed $m$, if $I \geqslant l_2$, then
$$ \left (\sum_{i = 0}^I \Xi_i \bar \Xi_i \right ) x_{m,n} \otimes S^{m} e \bar S^n = x_{m,n} \otimes S^m e \bar S^n .$$
Hence, for each fixed $m$, we get the identity. The linear forms $\mc{C} \to \mc{B}$:
$$ \sum_{m,n} x_{m,n} \otimes S^m e \bar S^n \longmapsto x_{m_0,n_0} ,$$
correspond to projections onto entries in the matrix. By evaluating the linear forms on the partial sums, it follows that the (unique) limit is $x$.

\bigbreak

Thus, the $\sum_{i= 0}^I \Xi_i \bar \Xi_i $ form an approximate unit for $\mc{C}$.
\end{proof}

\begin{Th}
\label{context}
If $\mc{B}$ is a tame smooth GCP, then there is a Morita bicontext between $\mc{C}$ and $\mc{A}$. If we denote by $\iota_\mc{C}$ the inclusion of $\mc{A}$ into $\mc{C}$ defined by $a \mapsto a \otimes e$, the bicontext is given as follows:
\begin{itemize}
\item
 from $\mc{C}$ to $\mc{A}$ by $( \id, \mc{C}, \iota_\mc{C}, \Xi_l, \bar \Xi_l)$;
\item 
 from $\mc{A}$ to $\mc{C}$ by $( \iota_\mc{C}, \mc{C}, \id, 1 \otimes e, 1 \otimes e)$.
\end{itemize}
In particular, if $H$ is a $\sco $-stable and diffotopy-invariant functor, the canonical inclusion $\iota_\mc{C} \colon  \mc{A} \inj \mc{C}$ induces an isomorphism $H(\mc{A}) \simeq H(\mc{C})$.
\end{Th}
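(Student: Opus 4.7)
The strategy is to verify that the stated data define two Morita contexts in the sense of Definition \ref{contextcond} and satisfy the compatibility axioms of Definition \ref{Def:BicontextMorita}; the conclusion then follows immediately from Theorem \ref{contweakeriso}. The analytic substance has already been isolated in Lemmas \ref{into} and \ref{approx}, so what remains is essentially bookkeeping with the generators of $\mc{C}$ and $\ToepTop $.

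For the first context $(\id, \mc{C}, \iota_\mc{C}, \Xi_l, \bar\Xi_l)$ from $\mc{C}$ to $\mc{A}$, I begin by checking that $\Xi_l, \bar\Xi_l \in \mc{C}$. Since $\mc{A}$ is unital and the degree-one frame satisfies $\sum_i S_{\eta_i^{(1)}}\bar S_{\eta_i^{(1)}} = 1$, one has $\sum_i (S_{\eta_i^{(1)}} \otimes S)(\bar S_{\eta_i^{(1)}} \otimes \bar S) = 1 \otimes S\bar S$ in $\ToepTop $, hence $1 \otimes e = 1 \otimes 1 - 1 \otimes S\bar S \in \ToepTop $; expressing $\Xi_l$ as a finite product of generators $(S_\eta \otimes S)$ multiplied by $(1 \otimes e)$ places it in $\ToepTop \cap (\mc{B} \TensTop  \sco) = \mc{C}$, and symmetrically for $\bar\Xi_l$. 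Axioms (ii) and (iii) of Definition \ref{contextcond} are then exactly the contents of Lemmas \ref{into} and \ref{approx}. Axiom (i) requires $\bar\Xi_i x \Xi_j \in \iota_\mc{C}(\mc{A})$, which is slightly stronger than the $\mc{B} \otimes e$ membership supplied by Lemma \ref{into}; the gap is closed by the observation that $\bar\Xi_i x \Xi_j$ also lies in $\mc{C}$, combined with the identification $\mc{C} \cap (\mc{B} \otimes e) = \iota_\mc{C}(\mc{A})$. This last identification follows from a bidegree argument using the gauge actions on $\mc{B}$ and $\stoe $: the generators from Proposition \ref{Prop:Kernel} all have matched bidegrees, so the only elements of $\mc{C}$ sitting in $\mc{B} \otimes e$ (bidegree $(m,0)$) are those with $m=0$, namely elements of $\mc{A} \otimes e$.

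The second context $(\iota_\mc{C}, \mc{C}, \id, 1\otimes e, 1\otimes e)$ from $\mc{A}$ to $\mc{C}$ is a degenerate corner-type context: the ``sequences'' consist of a single element $1 \otimes e \in \mc{C}$, and the three Morita axioms reduce to the trivial identities $(1\otimes e)(a\otimes e)(1\otimes e) = a\otimes e$ and $(1\otimes e)(1\otimes e)\iota_\mc{C}(a) = \iota_\mc{C}(a)$. Left compatibility of the two contexts is immediate because $\mc{C}$ is an algebra containing $\Xi_i, \bar\Xi_i, 1\otimes e$. Right compatibility reduces, via the Toeplitz relations $eS = \bar Se = 0$, to the computation $(a \otimes e)(1 \otimes e)\Xi_j = a S_{\xi_j} \otimes eS^{|\xi_j|}e$, which vanishes for $|\xi_j| \geq 1$ and equals $a\xi_j \otimes e \in \mc{A} \otimes e$ when $|\xi_j| = 0$ (since then $\xi_j \in \mc{A}$), together with its mirror statement.

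With the bicontext verified, Theorem \ref{contweakeriso} yields mutually inverse induced maps between $H(\mc{A})$ and $H(\mc{C})$. A direct unwinding of the map associated with the corner context shows that it factors, through the $\sco $-stability isomorphism, as $H(\iota_\mc{C})$; hence $H(\iota_\mc{C})$ is the asserted isomorphism, with inverse the frame-induced map coming from the first context. The only genuinely delicate step in the entire proof is the identification $\mc{C} \cap (\mc{B} \otimes e) = \iota_\mc{C}(\mc{A})$ needed to upgrade Lemma \ref{into} to the form required by axiom (i); everything else is routine algebraic bookkeeping.
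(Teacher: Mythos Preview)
Your proof is correct and follows essentially the same route as the paper: verify the three axioms of Definition~\ref{contextcond} for each context via Lemmas~\ref{into} and~\ref{approx}, check left/right compatibility using the Toeplitz relations $eS=\bar Se=0$ and the fact that the only degree-zero frame vector is $\xi_1=1$, and then invoke Theorem~\ref{contweakeriso}. The one place where your argument deviates slightly is axiom~(i) for the first context: the paper uses the explicit formula $\bar\Xi_i x\Xi_j=\bar S_{\xi_i}x_{|\xi_i|,|\xi_j|}S_{\xi_j}\otimes e$ from the proof of Lemma~\ref{into} together with the observation that for $x\in\mc{C}$ the coefficient $x_{k,l}$ is homogeneous of degree $k-l$, so a direct degree count gives membership in $\mc{A}\otimes e$; you instead package the same homogeneity information as the intersection identity $\mc{C}\cap(\mc{B}\otimes e)=\iota_\mc{C}(\mc{A})$, proved via the diagonal gauge action. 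Both arguments are the same bidegree observation in different clothing, and both are valid.
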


\begin{proof}
Let us first check that we have a Morita context from $\mc{C}$ to $\mc{A}$, by verifying the conditions of Definition \ref{contextcond}: the morphisms clearly are isomorphisms onto subalgebras of  $\mc{C}$. It remains to check the three conditions of Definition \ref{contextcond}. Let $x = \sum_{k,l} x_{k,l} \otimes S^k e \bar S^l$, where $x_{k,l}$ is a homogeneous element of degree $k - l$.
\begin{enumerate}[(i)]
\item $\bar \Xi_i X \Xi_j$ is in $\iota_\mc{C}(\mc{A})$: we saw in Lemma \ref{approx} that
$$
\bar \Xi_i \left( \sum_{k,l} x_{k,l} \otimes S^k e \bar S^l \right) \Xi_j = \bar S_{\xi_i} x_{|\xi_i|,|\xi_j|} S_{\xi_j} \otimes e .
$$
Counting the degree proves that $\bar S_{\xi_i} x_{|\xi_i|,|\xi_j|} S_{\xi_j}$ has degree $0$ and therefore, it is an element of $\mc{A}$.  Thus, $\bar \Xi_i X \Xi_j \in \iota_\mc{C}(\mc{A})$.
\item 
This is a consequence of Lemma \ref{into}.
\item 
This is a consequence of Lemma \ref{approx}.
\end{enumerate}

It is readily checked that $1 \otimes e$ induces a Morita context:
\begin{enumerate}[(i)]
\item
$1 \otimes e$ is in $\mc{C}$, hence $(1 \otimes e )x(1 \otimes e) \in \mc{C}$, for any $x \in \mc{C}$;
\item 
the sequence is finite and therefore rapidly decreasing;
\item 
 $(1 \otimes e) (1 \otimes e) (a \otimes e) = a \otimes e$, thus the partial sums converge in norm.
\end{enumerate}
It is now enough to check that the two Morita contexts are left and right compatible, by checking Definition \ref{Def:BicontextMorita}:
\begin{enumerate}[(i)]
\item
This condition is trivially verified, because the underlying algebra is $\mc{C}$ itself.
\item 
$ (1 \otimes e) \left(S_{\xi_l} \otimes S^{|\xi_l|} e \right)$ vanishes if $|\xi_l| \neq 0$. Moreover, the only element with degree $0$ is $\xi_1 = 1$ and then $S_{\xi_1} \otimes S^{|\xi_1|} e = 1 \otimes e$, thus $\iota_{\mc{C}}(a) (1 \otimes e) = \iota_\mc{C}(a)$. In the same way, $\left( \bar S_{\xi_l} \otimes  e \bar S^{|\xi_l|} \right) (1 \otimes e) $ equals either $0$ (when $|\xi_l| \neq 1$) or $1 \otimes e$ (when $|\xi_l| = 1$) and in this case $ (1 \otimes e) \iota_\mc{C}(a) = \iota_\mc{C}(a)$.
\end{enumerate}
\end{proof}

\section{Equivalence of the Toeplitz and base algebras}
\label{Sec:ToepBase}

In this section, we will prove that the smooth Toeplitz algebra $\ToepTop $ is ``$H$-equivalent'' to the base algebra $\mc{A}$, for any half-exact, $\sco$-stable and diffotopy-invariant functor $H$.

For that purpose, we consider a quasihomomorphism $(\alpha, \overline{\alpha}) \colon \ToepTop \rrarrow \mc{B} \otimes \Toep \trianglerighteq \mc{C} $. We will prove that the morphism of abelian groups $H(\alpha, \overline{\alpha})$ is both left- and right-invertible -- therefore showing that it is invertible.

We use Theorem \ref{context} to prove that it is right-invertible. It will be more difficult to find a left inverse. We will need to enlarge the image of the quasihomomorphism. More explicitly, we will have to introduce an algebra $\bar{\mc{C}}$ and a quasihomomorphism $(\Phi,\Psi) \colon \ToepTop\rrarrow Z\big( \mc{B} \TensTop\Toep\TensTop\Toep \big)\trianglerighteq \bar{\mc{C}}$ (Lemma \ref{Lem:qMorph2}), then exhibit a Morita context  (Proposition \ref{Prop:ContextMorita}) before we can conclude (Theorem \ref{PMV}).

Recall that $\mc{C}$ is the subalgebra of $\mc{B} \TensTop \stoe$ generated by
$\mc{A}\otimes e$, $S(\mc{E})\otimes Se$ and $\bar S(\mc{E})\otimes e\bar S$.
\begin{Lem}
\label{Lem:qMorphisme}
Let $\mc{B}$ be a smooth GCP, the morphisms $\alpha, \bar{\alpha} \colon \ToepTop \to \mc{B} \TensTop \Toep $ given by
\begin{align*}
\alpha &= \id_{\ToepTop}
&
\bar{\alpha}&= \Ad(1 \otimes S),
\end{align*}
where $\Ad(1 \otimes S)(x) = (1 \otimes S)x (1 \otimes \bar S)$, define a quasihomomorphism $(\alpha,\bar\alpha)\colon  \ToepTop\rrarrow \mc{B} \otimes\Toep\trianglerighteq \mc{C}$.
\end{Lem}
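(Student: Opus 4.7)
The plan is to verify in turn the three defining conditions of a quasihomomorphism: that $\bar\alpha$ is indeed a homomorphism into $\mc{B}\TensTop\Toep$, that $\mc{C}$ absorbs multiplication by $\alpha(\ToepTop)$ on either side, and that $(\alpha-\bar\alpha)(\ToepTop)$ lands in $\mc{C}$.

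First I would check that $\bar\alpha = \Ad(1\otimes S)$ is multiplicative. For $x,y \in \ToepTop$,
\[
\bar\alpha(x)\bar\alpha(y) = (1\otimes S)\,x\,(1\otimes \bar S S)\,y\,(1\otimes \bar S),
\]
and since $\bar S S = 1$ in $\Toep$, this collapses to $\bar\alpha(xy)$. Next, the absorption conditions $\alpha(\ToepTop)\,\mc{C} \subseteq \mc{C}$ and $\mc{C}\,\alpha(\ToepTop) \subseteq \mc{C}$ amount, since $\alpha=\id$, to saying that $\mc{C}$ is a two-sided ideal of $\ToepTop$; this is exactly the content of Proposition \ref{Prop:ExtLisse}, where $\mc{C} = \ker \bar\pi$.

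For the core condition $(\alpha-\bar\alpha)(\ToepTop)\subseteq \mc{C}$, I would invoke Proposition \ref{quasiprops}(\ref{Pt:4}): it is enough to verify the inclusion on a generating set, which by Definition \ref{Def:ToepAlg} is $\{a\otimes 1,\,S_\xi\otimes S,\,\bar S_\xi\otimes\bar S : a\in\mc{A},\,\xi\in\mc{E}\}$. Using $S\bar S = 1 - e$, a direct computation gives
\begin{align*}
(\alpha-\bar\alpha)(a\otimes 1) &= a\otimes(1-S\bar S) = a\otimes e,\\
(\alpha-\bar\alpha)(S_\xi\otimes S) &= S_\xi\otimes\bigl(S - S(S\bar S)\bigr) = S_\xi\otimes Se,\\
(\alpha-\bar\alpha)(\bar S_\xi\otimes\bar S) &= \bar S_\xi\otimes\bigl(\bar S - (S\bar S)\bar S\bigr) = \bar S_\xi\otimes e\bar S.
\end{align*}
By Proposition \ref{Prop:Kernel}, these are precisely the generators of $\mc{C}$, so the inclusion holds on the generating set and therefore on all of $\ToepTop$.

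There is no real obstacle here, but the step most worth double-checking is the multiplicativity of $\bar\alpha$: the identity $\bar S S = 1$ in $\Toep$ is what makes conjugation by the non-unitary isometry $1\otimes S$ into an honest homomorphism, and it is the same identity (through its failure on the other side, $S\bar S = 1-e$) that produces the factor of $e$ responsible for landing $(\alpha-\bar\alpha)(\ToepTop)$ inside $\mc{C}$.
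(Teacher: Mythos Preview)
Your proof is correct and follows the same approach as the paper: both verify the quasihomomorphism condition on the generating set via Proposition~\ref{quasiprops}\ref{Pt:4}, with the same three computations landing on the generators of $\mc{C}$ described in Proposition~\ref{Prop:Kernel}. You are slightly more explicit than the paper in checking that $\bar\alpha$ is multiplicative (via $\bar S S = 1$) and that the absorption condition holds (since $\mc{C}=\Ker\bar\pi$ is an ideal), but these are exactly the points the paper leaves implicit when it writes ``It suffices to see that the ideal $\mc{C}$ contains $(\alpha-\bar\alpha)(\ToepTop)$.''
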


\begin{proof}
It suffices to see that the ideal $\mc{C}$ contains $(\alpha - \overline{\alpha})(\ToepTop )$. Since 
\begin{align*}
\big(\alpha - \overline{\alpha} \big)(a \otimes 1) &= a \otimes 1 - (1 \otimes S)(a \otimes 1)(1 \otimes \bar S) = a \otimes (1 - S \bar S) = a \otimes e ,\\
\big(\alpha - \overline{\alpha} \big)(S_{\xi} \otimes S)&= S_\xi \otimes  S - (1 \otimes S)(S_\xi \otimes  S)(1 \otimes \bar S) = S_\xi  \otimes S e ,\\
\big(\alpha - \overline{\alpha} \big)(\bar S_{\xi} \otimes \bar S)&= \bar S_{\xi} \otimes \bar S - (1 \otimes S)(\bar S_\xi \otimes \bar S)(1 \otimes \bar S) = \bar S_\xi \otimes e\bar S,
\end{align*}
the images of the generators are all in $\mc{C}$, and this suffices by Proposition \ref{quasiprops}.
\end{proof}

\begin{Def}
\label{Def:barCc}
If $\mc{B}$ is a smooth GCP, we denote by $\bar{\mc{C}}$ the closed subalgebra of $\mc{B} \TensTop \sco \TensTop \Toep $ generated by
$$ S_{\xi_I} \bar S_{\eta_J} \otimes S^p e \bar S^q \otimes S^{p'} \bar S^{q'}, $$
where $p,q \in \N$, $\xi_I \in \Modl ^{(p+p')}$ and $\eta_J \in \Modl ^{(q + q')}$.
\end{Def}

We define:
\begin{itemize}
\item
 $\Phi \colon \ToepTop\to Z\big( \mc{B} \TensTop  \Toep \TensTop \Toep \big)$ as the restriction of $\id_{ \mc{B}} \otimes (\phi_t)_t$, where $(\phi_t)_t \colon \Toep\to Z(\Toep\TensTop\Toep)$ is the diffotopy of Lemma \ref{ahomotopy},
\item 
and $\Psi \colon \ToepTop\to Z\big( \mc{B} \TensTop\Toep\TensTop\Toep\big)$ as the restriction of the map $\Ad(1\otimes \hat S)$, independant of $t$, given by $\Ad(1\otimes\hat S)(x)=(1\otimes \hat S)(x \otimes 1)(1\otimes\hat{\bar S})$ for all $x \in \mc{B} \TensTop\Toep$.
\end{itemize}
 
\begin{Lem}
\label{Lem:qMorph2}
Let $\mc{B}$ be a smooth GCP. The morphisms $\Phi, \Psi$ above define a quasihomomorphism $(\Phi,\Psi) \colon \ToepTop\rrarrow Z\big( \mc{B} \TensTop\Toep\TensTop\Toep \big)\trianglerighteq Z\bar{\mc{C}}$.
\end{Lem}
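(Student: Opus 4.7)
The plan is to verify the three defining conditions of Definition \ref{Def:Qmorphisme}: (1) $(\Phi-\Psi)(\ToepTop)\subseteq Z\bar{\mc{C}}$; (2) $\Phi(\ToepTop)\cdot Z\bar{\mc{C}}\subseteq Z\bar{\mc{C}}$; (3) $Z\bar{\mc{C}}\cdot\Phi(\ToepTop)\subseteq Z\bar{\mc{C}}$. Since $\ToepTop$ is generated as a locally convex algebra by the set $X=\{a\otimes 1,\,S_\xi\otimes S,\,\bar S_\xi\otimes\bar S : a\in\mc{A},\,\xi\in\Modl\}$ and $\Phi,\Psi$ are continuous algebra homomorphisms, condition (1) reduces via Proposition \ref{quasiprops}(iv) to a verification on $X$ (once (2)--(3) are established in the subalgebra $\hat{\mc{B}}$ generated by $\Phi(\ToepTop)\cup\Psi(\ToepTop)\cup Z\bar{\mc{C}}$, inside which $Z\bar{\mc{C}}$ then becomes a two-sided ideal), while (2)--(3) themselves follow from their analogues on $X$ by continuity and multiplicativity.

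For (1), I expand $\phi_t$ using Lemma \ref{ahomotopy}. Unitality of $\phi_t$ combined with $\hat S\hat{\bar S}=(1-e)\otimes 1$ yields $(\Phi-\Psi)(a\otimes 1)=a\otimes e\otimes 1$, constant in $t$. A direct computation on $S_\xi\otimes S$ gives
\[
(\Phi-\Psi)(S_\xi\otimes S)(t)=g(t)\,(S_\xi\otimes Se\otimes 1)+f(t)\,(S_\xi\otimes e\otimes S),
\]
and the analogous adjoint computation on $\bar S_\xi\otimes\bar S$ gives
\[
(\Phi-\Psi)(\bar S_\xi\otimes\bar S)(t)=g(t)\,(\bar S_\xi\otimes e\bar S\otimes 1)+f(t)\,(\bar S_\xi\otimes e\otimes\bar S).
\]
Each tensor appearing is a generator of $\bar{\mc{C}}$ in the sense of Definition \ref{Def:barCc}: the exponents $(p,q,p',q')$ match the middle and right Toeplitz factors, and the $\mc{E}$-factor has degree $p+p'$ (resp.\ $q+q'$). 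Since $f$ and $g$ are smooth with all derivatives vanishing at $t=0,1$ (Lemma \ref{ahomotopy}), the resulting paths lie in $Z\bar{\mc{C}}$.

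For (2)--(3), I check on an arbitrary generator $Y=S_{\xi_I}\bar S_{\eta_J}\otimes S^pe\bar S^q\otimes S^{p'}\bar S^{q'}$ of $\bar{\mc{C}}$ and each $x\in X$ that $\Phi(x)(t)\,Y$ and $Y\,\Phi(x)(t)$ remain in $\bar{\mc{C}}$ for every $t$ (and the same for $\Psi$). The verification splits into two kinds of bookkeeping. In the two Toeplitz tensor factors, the relations $eS=\bar S e=0$, $\bar S S=1$, $S\bar S=1-e$ reduce any product of the form $S^re\bar S^s\cdot S^ue\bar S^v$ either to a single canonical monomial $S^{r'}e\bar S^{s'}$ or to zero. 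In the $\mc{B}$-factor, multiplication by $a$, $S_\zeta$, or $\bar S_\zeta$ acts on $S_{\xi_I}$ (or $\bar S_{\eta_J}$) via the bimodule structures $\mc{A}\cdot\Modl^{(m)}\subseteq\Modl^{(m)}$, $\Modl^{(1)}\cdot\Modl^{(m)}\subseteq\Modl^{(m+1)}$, $\overline{\Modl^{(1)}}\cdot\Modl^{(m)}\subseteq\Modl^{(m-1)}$, shifting the degree of the $\mc{E}$-factor by $0$, $+1$, or $-1$ respectively.

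The main technical obstacle is to verify that these two kinds of shifts remain compatible in every case: each change $p\rightsquigarrow p\pm 1$ of a Toeplitz exponent (arising from $S\cdot S^p=S^{p+1}$ or $\bar S\cdot S^p=S^{p-1}$) must be matched by the corresponding shift in the degree of $S_{\xi_I}$, so that the constraint $|\xi_I|=p+p'$, $|\eta_J|=q+q'$ from Definition \ref{Def:barCc} is preserved. This requires a short case distinction on whether the Toeplitz exponents $p,p'$ vanish (since $eS^p=0$ for $p\geq 1$ kills many cross-terms). Once this routine verification is complete, closure under multiplication on generators extends to all of $Z\bar{\mc{C}}$ by continuity and the closedness of $\bar{\mc{C}}$, completing the proof.
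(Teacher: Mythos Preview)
Your proposal is correct and follows essentially the same approach as the paper: compute $(\Phi-\Psi)$ on the three types of generators, observe that the resulting elements lie in $\bar{\mc{C}}$ (your displayed formulas agree with the paper's, expressed in the $\hat{\,\cdot\,}$/$\check{\,\cdot\,}$ notation), and then verify stability of $Z\bar{\mc{C}}$ under the images. The only difference is one of detail: the paper dispatches the stability step in a single line by invoking Lemma~\ref{triviallem}, whereas you spell out the bookkeeping on generators of $\bar{\mc{C}}$ and the degree-matching constraint $|\xi_I|=p+p'$.
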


\begin{proof}
We have to check that $(\Phi - \Psi)(\ToepTop ) \subseteq Z \bar{\mc{C}}$ and that this algebra is $\Psi(\ToepTop)$-stable. We can check this on the generators for all $t$ by Proposition \ref{quasiprops}:
\begin{multline*}
(\Phi_t - \Psi_t)(a \otimes 1) = a \otimes \hat 1 - a \otimes \hat S \hat{\bar S} = a \otimes \hat e,  \\
\shoveleft{(\Phi_t - \Psi_t)(S_\xi \otimes S) = S_\xi \otimes \left( \hat S (\hat 1 - \hat e) + f(t) \check S + g(t) \hat S \hat e \right) - S_\xi \otimes \hat S \hat S \hat{\bar{S}} } \\
\shoveright{=  S_\xi \otimes \left( f(t) \check S + g(t) \hat S \hat e \right),} \\
\shoveleft{(\Phi_t - \Psi_t)(\bar S_\xi \otimes \bar S) = \bar S_\xi \otimes \left( (\hat 1 - \hat e) \hat{\bar{S}} + f(t) \check{\bar S} + g(t) \hat{ \bar e} \hat{\bar S} \right) - \bar S_\xi \otimes  \hat S \hat{\bar{S}} \hat{\bar{S}}}\\
=\bar S_\xi \otimes \left( f(t) \check{\bar S} + g(t) \hat{ e} \hat{\bar S}  \right).
\end{multline*}
All these elements are clearly in $\bar{\mc{C}}$. $\Psi(\ToepTop )$-stability follows from the algebraic relations, by an application of Lemma \ref{triviallem}.
\end{proof}

\begin{Prop}
\label{Prop:ContextMorita}
Let $\mc{B}$ be a tame smooth GCP, there is a Morita context $\left(\id_{\bar{\mc{C}}'}, \bar{\mc{C}}, \iota_1,\Xi_i',\bar \Xi_j' \right)$ from $\bar{\mc{C}}$ to $\ToepTop $, given by:
\begin{itemize}
\item
 the sequences $\Xi_i' = \Xi_i \otimes 1 $ and $\bar \Xi_i' = \bar \Xi_i \otimes 1$, using Notation \ref{Not:Xi};
\item 
 the morphism $\iota_1 \colon \ToepTop \to \bar{\mc{C}}$ defined by
\begin{align*}
\iota_1(a \otimes 1) &= a \otimes e \otimes 1 
&
\iota_1(S_\xi \otimes S) &= S_\xi \otimes e \otimes S 
&
\iota_1(\bar S_\xi \otimes \bar S) &= \bar S_\xi \otimes e \otimes \bar S ,
\end{align*}
for $a \in \mc{A}$ and $\xi \in \Modl $.
\end{itemize}
\end{Prop}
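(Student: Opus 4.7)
The plan is to verify the three conditions of Definition \ref{contextcond} with $\phi = \id_{\bar{\mc{C}}}$, $\psi = \iota_1$, and the sequences $\Xi_i', \bar \Xi_i'$. The argument closely parallels the proof of Theorem \ref{context}, with $\ToepTop$ replacing $\mc{A}$ (on the $\psi$-side) and $\bar{\mc{C}}$ replacing $\mc{C}$ (on the $\phi$-side); the additional $\Toep$-factor in the third tensor slot plays a passive role. A preliminary observation is that $\iota_1$ extends uniquely to an injective algebra homomorphism $\mc{B} \TensTop \Toep \to \mc{B} \TensTop \sco \TensTop \Toep$, $y \otimes z \mapsto y \otimes e \otimes z$: multiplicativity uses $e^2 = e$, and one checks that the formulas on the generators of $\ToepTop$ match the stated ones.

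For condition (i), I compute $\bar \Xi_i' \cdot x \cdot \Xi_j'$ on a generator $x = S_{\xi_I} \bar S_{\eta_J} \otimes S^p e \bar S^q \otimes S^{p'} \bar S^{q'}$ of $\bar{\mc{C}}$. Identifying $S^p e \bar S^q$, $e \bar S^n$ and $S^n e$ with the matrix units $e_{p,q}$, $e_{0,n}$ and $e_{n,0}$ in $\sco$ and using $e_{a,b} e_{c,d} = \delta_{b,c} e_{a,d}$, the middle tensor factor collapses to $\delta_{|\xi_i|,p}\delta_{q,|\xi_j|} e$. When nonzero, the degree conditions $\xi_I \in \mc{E}^{(p+p')}$ and $\eta_J \in \mc{E}^{(q+q')}$ together with $|\xi_i| = p$, $|\xi_j| = q$ force $\bar S_{\xi_i} S_{\xi_I}$ to have degree $p'$ and $\bar S_{\eta_J} S_{\xi_j}$ to have degree $-q'$. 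Writing each as a sum of products of degree-one elements -- possible since $\mc{B}$ is generated by $\mc{A}$ and $\mc{E}$ -- the expression $\bar S_{\xi_i} S_{\xi_I} \bar S_{\eta_J} S_{\xi_j} \otimes S^{p'} \bar S^{q'}$ appears as a sum of products of generators of $\ToepTop$, and hence $\bar \Xi_i' x \Xi_j' \in \iota_1(\ToepTop)$. By continuity, the inclusion extends to all of $\bar{\mc{C}}$.

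Condition (ii) is verified exactly as in Lemma \ref{into}: tameness provides polynomial bounds on the seminorms of $S_{\xi_i}$ and $\bar S_{\xi_j}$, and these combine with the rapid decrease of the coefficients of $x \in \mc{B} \TensTop \sco \TensTop \Toep$ in the middle $\sco$-slot to give a rapidly decreasing sequence $(\bar \Xi_i' x \Xi_j')_{ij}$ valued in $\iota_1(\ToepTop)$. For condition (iii), I follow Lemma \ref{approx}: on a generator of $\bar{\mc{C}}$ the matrix-unit products kill all terms in $\sum_i \Xi_i' \bar \Xi_i' x$ except those with $|\xi_i| = p$, so the sum is finite, and the frame identity $\sum_{|\xi_i|=p} S_{\xi_i} \bar S_{\xi_i} = 1_\mc{A}$ yields exactly $x$. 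Convergence for general $x \in \bar{\mc{C}}$ follows from the same polynomial-growth-versus-rapid-decrease balance used in Lemma \ref{approx}.

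The main obstacle is the verification of condition (i): identifying $\bar \Xi_i' x \Xi_j'$ as an element of $\iota_1(\ToepTop)$ relies sensitively on the degree conditions built into the definition of $\bar{\mc{C}}$ and on the availability of frames in all degrees -- i.e., precisely on the algebra being a \emph{tame} smooth GCP.
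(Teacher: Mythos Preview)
Your proof is correct and follows essentially the same approach as the paper: verify the three conditions of Definition~\ref{contextcond}, reducing (ii) and (iii) to the arguments of Lemmas~\ref{into} and~\ref{approx} with the extra $\Toep$-factor carried along passively. The paper's own proof of (i) is in fact terser than yours---it simply asserts that ``a simple computation of degree'' places $\bar\Xi_i' Y \Xi_j'$ in $\iota_1(\ToepTop)$.

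One small point on your justification of (i): the phrase ``possible since $\mc{B}$ is generated by $\mc{A}$ and $\mc{E}$'' is not quite the right reason. Generation only tells you that a degree-$p'$ element is a limit of degree-$p'$ words in $\mc{A}\cup\mc{E}\cup\bar{\mc{E}}$, and such words may contain $\bar{\mc{E}}$-factors; tensoring a word like $S_\xi S_\zeta \bar S_\eta$ with $S$ does \emph{not} literally give a product of generators of $\ToepTop$ (since $S^2\bar S = S - Se \neq S$). What actually works---and what you correctly flag in your final paragraph---is the frame identity: for $\alpha\in\mc{E}^{(p')}$ one writes $\alpha = \sum_k \eta_k^{(1)}(\overline{\eta_k^{(1)}}\alpha)$ and iterates, so that $\alpha\otimes S^{p'}$ is genuinely a finite sum of products $(S_{\eta_{k_1}^{(1)}}\otimes S)\cdots(S_{\eta_{k_{p'}}^{(1)}}\otimes S)(a\otimes 1)\in\ToepTop$. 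With that adjustment your argument is complete.
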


Notice that the Morita context in fact induces a morphism from $\bar{\mc{C}}$ to $\iota_1( \ToepTop ) \otimes \sco$. We identify $\iota_1(\ToepTop ) \simeq \ToepTop $ to obtain a map $\theta \colon \bar{\mc{C}} \to \ToepTop \TensTop \sco $.

\begin{proof}
We check the conditions of Definition \ref{contextcond}.
\begin{enumerate}[(i)]
\item
Algebraically, if we take $Y =S_{\xi_I} \bar S_{\xi_J} \otimes S^p e S^q \otimes S^{p'} S^{q'} $ as in Definition \ref{Def:barCc}, 
\begin{multline*}
\bar \Xi_i' Y \Xi_j' =\\
= \left( \bar S_{\xi_i} \otimes e \bar S^{|\xi_i|} \otimes 1\right) \left( S_{\xi_I} \bar S_{\eta_J} \otimes S^p e S^q \otimes S^{p'} S^{q'} \right) \left( S_{\xi_j} \otimes S^{|\xi_j|} e \otimes 1 \right) = \\
= \delta_{|\xi_i|,p} \delta_{|\xi_j|,q} \bar S_{\xi_i} S_{\xi_I} \bar S_{\eta_J} S_{\xi_j} \otimes e \otimes S^{p'} \bar S^{q'}.
\end{multline*}
A simple computation of degree shows that $\bar \Xi_i' Y \Xi_i' $ is in $\iota_1(\ToepTop )$.
\item 
Since $\bar{\mc{C}}$ is a subalgebra of $\mc{B} \TensTop \sco \TensTop \Toep $, by using the canonical isomorphism with $\big( \mc{B} \TensTop \Toep \big) \TensTop \sco $ all the elements of $\bar{\mc{C}}$ can be written as
\begin{equation}
\label{Eqn:FormeGbarCc}
\sum_{k,l} Y_{k,l} \otimes S^k e \bar S^l ,
\end{equation}
where $(Y_{k,l})_{k,l}$ is a rapidly decreasing sequence of $\mc{B} \TensTop \Toep$. In the same way as in Lemma \ref{into}, the sequences $\bar \Xi'_i$ and $\Xi'_j$ have polynomial growth in $i,j$. The sequence $S^k e \bar S^l$ also have polynomial growth. The rapidly decreasing sequence $Y_{k,l}$ can therefore ``absorb'' these terms and $\bar \Xi_i' Y \Xi_j'$ is a rapidly decreasing sequence of $\mc{B} \TensTop \Toep $.
\item 
Algebraically
\begin{multline*}
\Xi_i' \bar \Xi_i' Y  = \left( S_{\xi_i} \bar S_{\xi_i} \otimes S^{|\xi_i|} e \bar S^{|\xi_i|} \otimes 1\right) \left( S_{\xi_I} \bar S_{\eta_J} \otimes S^p e S^q \otimes S^{p'} S^{q'} \right) = \\
= \delta_{|\xi_i|,p}  S_{\xi_i} \bar S_{\xi_i}  S_{\xi_I} \bar S_{\eta_J} \otimes S^p e S^q \otimes S^{p'} \bar S^{q'}
\end{multline*}
and equation \eqref{Eqn:Repere} proves that we only have to sum the $\xi_i$ such that $|\xi_i| = p$ to ``reconstruct'' $Y$. Just like before, we can start from elements of $\bar{\mc{C}}$ of the form \eqref{Eqn:FormeGbarCc} with a rapidly decreasing $Y_{k,l}$. The end of the argument is  very similar to that of Lemma \ref{approx}: the sequence $\Xi_i' \bar \Xi_i' Y_{|\xi_i|,l}$ is rapidly decreasing and therefore summable, and then we prove that the limit can only be $\sum_{k,l} Y_{k,l} \otimes S^k e \bar S^l $, by projecting onto the coefficients of this matrix.
\end{enumerate}
\end{proof}

We define a morphism $\sigma \colon \ToepTop  \to \ToepTop \TensTop \sco$ for $x \in \ToepTop $ by
$$ \sigma(x) = x \otimes e ,$$
where we identify $\sco$ with the algebra generated by $S^p e \bar S^q$ for $p,q \in \N$, just as in Definition \ref{Def:ToepLisse}.

\begin{Lem}
\label{Lem:InvPsi1Phi1}
If $\mc{B}$ is a tame smooth GCP and if $H$ is a $\sco$-stable and split exact functor, 
$$ H(\theta) \circ H(\Phi_1, \Psi_1) = H(\sigma) ,$$ 
up to identification of $\iota_1(\ToepTop )$ with $\ToepTop $. Consequently, $ H(\Phi_1, \Psi_1)$ is left-invertible.
\end{Lem}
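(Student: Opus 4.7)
The plan is to use Proposition~\ref{quasiprops}(v) to reduce the quasihomomorphism $(\Phi_1,\Psi_1)$ to an ordinary homomorphism, and then to recognize the resulting composition $\theta\circ\iota_1$ as a corner inclusion that agrees with $\sigma$ up to $\sco$-stability.

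For the reduction, I would specialize the formulas derived in the proof of Lemma~\ref{Lem:qMorph2} to $t=1$: with $f(1)=1$ and $g(1)=0$ coming from Lemma~\ref{ahomotopy}, one reads off
\begin{align*}
(\Phi_1-\Psi_1)(a\otimes 1)&=a\otimes e\otimes 1,
&
(\Phi_1-\Psi_1)(S_\xi\otimes S)&=S_\xi\otimes e\otimes S,
\\
(\Phi_1-\Psi_1)(\bar S_\xi\otimes \bar S)&=\bar S_\xi\otimes e\otimes \bar S,
\end{align*}
which coincides with $\iota_1$ on these generators. Orthogonality of $\iota_1$ and $\Psi_1$ then follows from elementary combinatorics in $\Toep $: any $\iota_1(x)$ has middle tensor factor $e$, while $\Psi_1=\Ad(1\otimes\hat S)$ produces middle factors lying in $S\cdot \Toep\cdot\bar S$, and the relations $eS=0$, $\bar Se=0$, $e(1-e)=0$ annihilate every product $\iota_1(x)\Psi_1(y)$ and $\Psi_1(y)\iota_1(x)$ on generators. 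A standard induction on the length of monomials (using that $\iota_1$ and $\Psi_1$ are morphisms), together with continuity, extends both the identity $\Phi_1-\Psi_1=\iota_1$ and the orthogonality to all of $\ToepTop$. Proposition~\ref{quasiprops}(v) then yields $H(\Phi_1,\Psi_1)=H(\iota_1)$, whence
\begin{equation*}
H(\theta)\circ H(\Phi_1,\Psi_1)=H(\theta\circ \iota_1).
\end{equation*}

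Next, I would compute $\theta\circ\iota_1$ on a generator $y$ of $\ToepTop$. The matrix entry $\bar\Xi_i'\,\iota_1(y)\,\Xi_j'$ of $\theta(\iota_1(y))$ has middle tensor factor of the form $e\bar S^{|\xi_i|}\cdot e\cdot S^{|\xi_j|}e$, which vanishes unless $|\xi_i|=|\xi_j|=0$ (since $\bar S^k e=0$ and $eS^k=0$ for every $k\geqslant 1$). The unique index with $|\xi_\ell|=0$ is $\ell=1$ (namely $\xi_1=1$), so only the $(1,1)$ entry survives and equals $\iota_1(y)$. Under the identification $\iota_1(\ToepTop)\simeq \ToepTop$ used in the definition of $\theta$, the morphism $\theta\circ\iota_1\colon \ToepTop\to \sco\TensTop \ToepTop$ is thus the corner inclusion $y\mapsto p\otimes y$, where $p\in\sco$ is the minimal idempotent attached to $\xi_1$.

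Finally, $\sigma(y)=y\otimes e_{00}$ is also an inclusion by a minimal idempotent of $\sco$ (up to the flip $\sco\TensTop \ToepTop\simeq \ToepTop\TensTop \sco$). Any two minimal idempotents of $\sco$ are diffotopic as idempotents, via a rotation inside a $2\times 2$ matrix block, so the two inclusions are diffotopic; diffotopy-invariance of $H$ therefore gives $H(\theta\circ\iota_1)=H(\sigma)$. Since $H(\sigma)$ is an isomorphism by $\sco$-stability, $H(\sigma)^{-1}\circ H(\theta)$ is a left inverse of $H(\Phi_1,\Psi_1)$, proving left-invertibility. The delicate point of the argument is the first step: getting the $(\Phi_1,\Psi_1)$ machinery to collapse exactly to $\iota_1$ requires the specific $t=1$ endpoint of the diffotopy of Lemma~\ref{ahomotopy} together with the Toeplitz identities $eS=0=\bar Se$, and everything that follows is then a formal consequence.
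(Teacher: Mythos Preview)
Your argument follows essentially the same route as the paper: decompose $\Phi_1=\Psi_1\oplus\iota_1$ using orthogonality of the images (coming from $eS=0=\bar Se$ in the middle tensor factor), apply Proposition~\ref{quasiprops}(v) to get $H(\Phi_1,\Psi_1)=H(\iota_1)$, and then identify $\theta\circ\iota_1$ as a corner inclusion.

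One point deserves tightening. In the last step you invoke diffotopy-invariance of $H$ to compare two minimal-idempotent corner inclusions, but the lemma only assumes $H$ is $\sco$-stable and split exact. The paper sidesteps this: after the identifications already in force (namely $\iota_1(\ToepTop)\simeq\ToepTop$ and the concrete realization of $\sco$ inside $\stoe$ via $S^pe\bar S^q$), the map $\theta\circ\iota_1$ is \emph{equal} to $\sigma$ as a homomorphism, not merely diffotopic to it. Indeed, your own computation shows the only surviving matrix entry is the one indexed by $\xi_1=1$, and under the identification of the abstract index set with the basis of $\sco\subset\stoe$ this first entry is precisely $e=1-S\bar S$, which is the idempotent used to define $\sigma$. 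So the rotation between idempotents is unnecessary; the exact equation $H(\theta)\circ H(\Phi_1,\Psi_1)=H(\sigma)$ holds without any appeal to diffotopy. (A related minor point: the extension of $\Phi_1-\Psi_1=\iota_1$ from generators works not because $\Phi_1-\Psi_1$ is multiplicative, but because $\Psi_1+\iota_1$ is a homomorphism thanks to the orthogonality you established, and agrees with the homomorphism $\Phi_1$ on generators.)
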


\begin{proof}
Since $\phi_t$ is unital,
\begin{align*}
\Phi_1(a\otimes 1)&=a\otimes 1\otimes 1=(\Ad(1\otimes \hat S)(a\otimes \hat 1))+(a\otimes \check 1),\\
\Phi_1(S_\xi\otimes S)&=S_\xi\otimes(\hat S^2\hat{ \bar S}+\check S)= \left(\Ad(1\otimes \hat S)(S_\xi\otimes \hat S) \right)+(S_\xi\otimes \check S).
\end{align*}
If we use the notation $\iota_1$ introduced in Proposition \ref{Prop:ContextMorita}, we see that $\Image(\Ad(1\otimes \hat S))$ and $\Image(\iota_1)$ are orthogonal because $\bar Se=0=eS$. We can therefore apply Proposition \ref{quasiprops} and deduce
\begin{align*} 
H(\Phi_1,\Psi_1)= H\big(\Ad(1\otimes \hat S)\oplus \iota_1,\Ad(1\otimes \hat S)\big)= H(\iota_1).
\end{align*}
Using Theorem \ref{context}, it is easy to show that $\theta \circ \iota_1$  is equal to $\sigma$, up to identification of $\iota_1(\ToepTop )$ with $\ToepTop $. Since $H$ is $\sco$-stable, $H(\sigma)$ is an isomorphism, which completes the proof.
\end{proof}

Let us recall that $\iota \colon \mc{A} \to \ToepTop $ is the canonical inclusion, while the injection $\iota_\mc{C} \colon \mc{A} \to \mc{C}$ was defined in Theorem \ref{context} and the quasihomomorphism $(\alpha,\bar\alpha) \colon  \ToepTop\rrarrow \mc{B} \otimes\Toep\trianglerighteq \mc{C}$ in Lemma \ref{Lem:qMorphisme}.

\begin{Th}
\label{PMV} 
Let $H$ be a split-exact, diffotopy-invariant and $\sco$-stable functor from the category of locally convex algebras to that of abelian groups and $\mc{B}$ a tame smooth GCP. The inclusion $\iota \colon  \mc{A} \inj \ToepTop $ induces an isomorphism
 $$H(\iota) \colon  H(\mc{A}) \xrightarrow{\sim} H(\ToepTop).$$
\end{Th}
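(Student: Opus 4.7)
The plan is to produce a two-sided inverse of $H(\iota)$ by combining the quasihomomorphism $(\alpha,\bar\alpha)$ from Lemma \ref{Lem:qMorphisme} with the isomorphism $H(\iota_{\mc{C}}) \colon H(\mc{A}) \xrightarrow{\sim} H(\mc{C})$ from Theorem \ref{context}. More precisely, I will establish the identity
\begin{equation*}
H(\alpha,\bar\alpha) \circ H(\iota) = H(\iota_{\mc{C}})
\end{equation*}
together with the fact that $H(\alpha,\bar\alpha)$ is itself an isomorphism. Once these are known, the identity $H(\iota) = H(\alpha,\bar\alpha)^{-1} \circ H(\iota_{\mc{C}})$ exhibits $H(\iota)$ as a composition of isomorphisms.

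For the displayed identity, I would compute directly on $a \in \mc{A}$: one finds $\alpha(\iota(a)) = a \otimes 1$ and $\bar\alpha(\iota(a)) = a \otimes S\bar S$, so that $(\alpha - \bar\alpha)(\iota(a)) = a \otimes e = \iota_{\mc{C}}(a)$. Because $e^2 = e$, this difference is a genuine homomorphism, and its image is orthogonal to that of $\bar\alpha \circ \iota$ via the identities $eS = 0$ and $\bar S e = 0$. Proposition \ref{quasiprops}(v) then yields $H(\alpha \circ \iota, \bar\alpha \circ \iota) = H(\iota_{\mc{C}})$, while Proposition \ref{quasiprops}(ii) rewrites the left-hand side as $H(\alpha,\bar\alpha) \circ H(\iota)$. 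Since $H(\iota_{\mc{C}})$ is invertible by Theorem \ref{context}, this already shows that $H(\iota)$ is left-invertible and that $H(\alpha,\bar\alpha)$ is right-invertible.

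The remaining step, which is the substance of the argument, is to show that $H(\alpha,\bar\alpha)$ is also left-invertible. For this I would inspect the quasihomomorphism $(\Phi,\Psi)$ from Lemma \ref{Lem:qMorph2} at the endpoint $t = 0$. Since $\phi_0(x) = \hat x = x \otimes 1$, a direct check gives $\Phi_0 = j \circ \alpha$ and $\Psi_0 = j \circ \bar\alpha$, where $j \colon \mc{B} \TensTop \Toep \to \mc{B} \TensTop \Toep \TensTop \Toep$ is the canonical inclusion $y \mapsto y \otimes 1$, whose restriction to $\mc{C}$ takes values in $\bar{\mc{C}}$ and defines a morphism $\iota_{\bar{\mc{C}}} \colon \mc{C} \to \bar{\mc{C}}$. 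Proposition \ref{quasiprops}(iii) then gives $H(\Phi_0,\Psi_0) = H(\iota_{\bar{\mc{C}}}) \circ H(\alpha,\bar\alpha)$. Diffotopy-invariance via Proposition \ref{quasiprops}(vi) implies $H(\Phi_0,\Psi_0) = H(\Phi_1,\Psi_1)$, and Lemma \ref{Lem:InvPsi1Phi1} provides a left inverse for the latter. Therefore $H(\alpha,\bar\alpha)$ admits a left inverse, and combined with the right-invertibility already established it is an isomorphism, from which the theorem follows.

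The genuinely delicate work has already been absorbed into the preceding lemmas: the Morita bicontext between $\mc{A}$ and $\mc{C}$, the construction of the enlarged algebra $\bar{\mc{C}}$ together with its Morita context to $\ToepTop$, and the Toeplitz-type diffotopy of Lemma \ref{ahomotopy}. Given these tools, the theorem itself reduces to a clean diagram chase; the only points demanding care are the orthogonality identities $eS = 0 = \bar S e$ required to invoke Proposition \ref{quasiprops}(v), and the correct identification of $(\Phi_0,\Psi_0)$ as $(\alpha,\bar\alpha)$ composed with the inclusion $\iota_{\bar{\mc{C}}}$.
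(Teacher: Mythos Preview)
Your proposal is correct and follows essentially the same strategy as the paper's proof: establish $H(\alpha,\bar\alpha)\circ H(\iota)=H(\iota_{\mc{C}})$ to get right-invertibility of $H(\alpha,\bar\alpha)$, then obtain left-invertibility by factoring $(\Phi_0,\Psi_0)$ through $(\alpha,\bar\alpha)$ via the inclusion $\mc{C}\to\bar{\mc{C}}$ and invoking diffotopy-invariance together with Lemma~\ref{Lem:InvPsi1Phi1}. Your write-up is in fact slightly more detailed than the paper's, making explicit the use of Proposition~\ref{quasiprops}(v) for the identity $H(\alpha\circ\iota,\bar\alpha\circ\iota)=H(\iota_{\mc{C}})$ and of Proposition~\ref{quasiprops}(iii) for the factorisation at $t=0$, where the paper simply asserts these steps.
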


\begin{proof}
Let $y:=H(\alpha,\bar\alpha)$. It suffices to show that $y$ is invertible. Indeed, as
$$y\circ H(\iota)=H(\alpha\circ\iota,\bar\alpha\circ\iota)=H(\iota_C)$$
and $H(\iota_C)$ is invertible by Theorem \ref{context}, so is $H(\iota)$.

It further suffices to show that $y$ is right and left invertible. To see right-invertibility, we use Proposition \ref{quasiprops} to see that
$$ H(\alpha,\bar\alpha)\circ H(\iota)=H(\alpha\circ\iota,\bar\alpha\circ\iota)=H(\iota_C) $$
and thus $H(\iota_C)$ is invertible by Theorem \ref{context}.

To see left-invertibility,  we show that there exists a morphism $z$ such that  $z \circ y$ is left-invertible. For $z$, we may take the morphism induced by the restriction  of $\Phi_0$ to a morphism $\mc{C}\to \bar{\mc{C}}$. In other words, we simply view $(\alpha,\bar\alpha)$ as a quasihomomorphism into a larger algebra, namely as the quasihomomorphism $(\Phi_0,\Psi_0)$.  As by Lemma \ref{Lem:InvPsi1Phi1}, $H(\Phi_1,\Psi_1)$ is invertible, and $H$ is diffotopy-invariant, we get the result. 
\end{proof}

\section{Six-term exact sequence and Chern character}
\label{Sec:6terms}

We now use the theory $kk$ to show that there are exact sequences in $kk$ and bivariant cyclic theory that are compatible with the Chern-Character and the boundary maps. In particular, we get, specializing the first algebra to $\mC$, such sequences in $K$-theory and  cyclic theory.

However, to give our first result, we do not really need $kk$-theory, but we obtain a six-term exact sequence for every half-exact, $\sco$-stable and diffotopy-invariant functor.
\begin{Def} A functor $H$ on the category of locally convex algebras with values in abelian groups is called
 \emph{half-exact} if for every linearly split short exact sequence
\[\xymatrix{ 0\ar[r]&\mc{A}\ar[r]&\mc{B}\ar[r]&\mc{C}\ar[r]&0}\] 
the sequence
\[\xymatrix{ H(\mc{A})\ar[r]&H(\mc{B})\ar[r]&H(\mc{C})}\]
obtained by applying $H$ is exact.
\end{Def}

It is easy to see that a \emph{half-exact} functor is always \emph{split-exact}. These functors are interesting because they yield six-term exact sequences, as we will see shortly. We remind the reader that $\mathscr{S} \mc{A}$ denotes the smooth suspension of the algebra $\mc{A}$.
\begin{Lem}
\label{Lem:HHexagone}
If $\mc{A}$, $\mc{B}$ and $\mc{C}$ are locally convex algebras and if $H$ is a half-exact, diffotopy-invariant and $\sco$-stable functor, then the extension
$$ 0 \longrightarrow \mc{A} \longrightarrow \mc{B} \longrightarrow \mc{C} \longrightarrow 0 $$
induces a six-term exact sequence:
\[\xymatrix{ H(\mc{A})\ar[r]&H(\mc{B})\ar[r]&H( \mc{C})\ar[d]\\
H(\mathscr{S} \mc{C})\ar[u]&H(\mathscr{S} \mc{B})\ar[l]&H(\mathscr{S} \mc{A}).\ar[l]
}\]
\end{Lem}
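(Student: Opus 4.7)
The plan is to carry out the classical Puppe/mapping-cone argument of Cuntz, adapted to the smooth locally convex setting, and then fold the resulting long exact sequence into a hexagon using smooth Bott periodicity.

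The first step is to form the smooth mapping cone of the quotient $f \colon \mc{B} \to \mc{C}$, namely
$$C_f := \bigl\{(b, g) \in \mc{B} \oplus C^\infty([0,1], \mc{C}) : g(0) = f(b),\ g^{(k)}(1) = 0\ \forall k \geqslant 0 \bigr\},$$
equipped with pointwise multiplication. It fits into two linearly split extensions:
$$0 \to \mc{A} \to C_f \to \Gamma\mc{C} \to 0 \qquad\text{and}\qquad 0 \to \mathscr{S}\mc{C} \to C_f \to \mc{B} \to 0,$$
where $\Gamma\mc{C}$ consists of paths $g \in C^\infty([0,1],\mc{C})$ with $g(0)=0$ and $g^{(k)}(1)=0$ for all $k$; the linear splittings come from the one for the original extension together with a cut-off profile. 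Since $\Gamma\mc{C}$ is smoothly contractible via the rescaling diffotopy $g \mapsto (t \mapsto g(s t))$, diffotopy-invariance gives $H(\Gamma\mc{C}) = 0$, and applying $H$ to the first extension with half-exactness forces the inclusion to induce an isomorphism $H(\mc{A}) \simeq H(C_f)$.

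Applying $H$ to the second extension and transporting by this isomorphism yields exactness at the three middle terms of $H(\mathscr{S}\mc{C}) \to H(\mc{A}) \to H(\mc{B}) \to H(\mc{C})$. Iterating on the suspended extension (which remains linearly split because $\mathscr{S}$ preserves linearly split sequences) extends this to the half-infinite sequence
$$\cdots \to H(\mathscr{S}^2\mc{C}) \to H(\mathscr{S}\mc{A}) \to H(\mathscr{S}\mc{B}) \to H(\mathscr{S}\mc{C}) \to H(\mc{A}) \to H(\mc{B}) \to H(\mc{C}).$$

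To fold this into the claimed hexagon one invokes smooth Bott periodicity $H(\mathscr{S}^2\mc{A}) \simeq H(\mc{A})$. This is extracted from the smooth Toeplitz extension $0 \to \sco \to \Toep \to C^\infty(\mT) \to 0$ of Section \ref{Sec:stoe}: by Lemma \ref{ahomotopy} and $\sco$-stability, $H$ of $\Toep$ reduces to $H$ of a point, so passing to the kernel of $\mathrm{ev}_1 \colon C^\infty(\mT) \to \mC$ (whose quotient identifies, up to a split summand, with $\mathscr{S}\mC$) produces a connecting map which is the Bott isomorphism, and tensoring with $\mc{A}$ promotes this to $H(\mathscr{S}^2\mc{A}) \simeq H(\mc{A})$. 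Composing $H(\mathscr{S}\mc{C}) \to H(\mathscr{S}^2\mc{A})$ with the inverse Bott supplies the wrap-around arrow $H(\mathscr{S}\mc{C}) \to H(\mc{A})$ and closes the hexagon. The main obstacle is this last step: the Puppe portion of the argument is routine, but one has to ensure that smooth Bott periodicity is genuinely available for \emph{arbitrary} half-exact, diffotopy-invariant, $\sco$-stable $H$ in the locally convex setting, which requires revisiting the Toeplitz construction with the level of generality imposed by the paper.
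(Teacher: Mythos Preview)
Your approach matches the paper's: the proof there simply cites the standard construction (Loday, \cite{CyclicHom}) for extending a half-exact diffotopy-invariant functor to a homology theory via $H_n := H \circ \mathscr{S}^n$, and then invokes smooth Bott periodicity---either via Cuntz's Toeplitz argument in \cite{MR2240217} or, alternatively, by the universal property of $kk$---to close the hexagon. Your mapping-cone/Puppe outline is precisely the content behind that first citation.

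One small ordering issue is worth flagging. You deduce $H(\mc{A}) \simeq H(C_f)$ from half-exactness together with $H(\Gamma\mc{C}) = 0$, but half-exactness by itself only gives exactness at the middle term of $H(\mc{A}) \to H(C_f) \to H(\Gamma\mc{C})$, i.e.\ surjectivity of $H(\mc{A}) \to H(C_f)$. Injectivity requires the connecting map $H(\mathscr{S}\Gamma\mc{C}) \to H(\mc{A})$, which presupposes the very Puppe sequence you are building. The standard remedy is to establish the Puppe sequence first for an \emph{arbitrary} morphism by iterating cones (identifying the cone of $C_f \to \mc{B}$ with $\mathscr{S}\mc{C}$ up to diffotopy), and only afterwards read off $H(\mc{A}) \simeq H(C_f)$ from the long sequence attached to $\mc{A} \to C_f \to \Gamma\mc{C}$. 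This is a reordering, not a substantive gap. Regarding the obstacle you flag at the end: note that the paper also offers the universality of $kk$ as a route to Bott periodicity, which sidesteps having to rework the Toeplitz argument in the generality you worry about.
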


\begin{proof}
This is a well known result (see for instance \cite{CyclicHom}, p.48). One extends the half-exact functor into a homology theory $H_n$ by setting $H_n(\mc{A}) = H(\mathscr{S}^n A)$. Observe that $H_n$ is automatically Bott-periodic, either by carrying over the proof from \cite{{MR2240217}} or by universality of $kk$; hence the result follows.
\end{proof}

\begin{Th}
\label{generalth} 
If $\mc{B}$ is a tame smooth GCP, then for every half-exact, diffotopy-invariant and $\sco$-stable functor $H$ there is a six-term exact sequence
\[\xymatrix{ H(\mc{A})\ar[r]&H(\mc{A})\ar[r]&H(D)\ar[d]\\
H(\mathscr{S}D)\ar[u]&H(\mathscr{S}\mc{A})\ar[l]&H(\mathscr{S}\mc{A}).\ar[l]
}\]
\end{Th}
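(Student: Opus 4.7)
The plan is to extract the exact sequence directly from the Toeplitz extension established in Proposition \ref{Prop:ExtLisse}, and then use the two ``equivalence'' results (Theorem \ref{context} and Theorem \ref{PMV}) to substitute $H(\mc{A})$ into two of the three spots. (I interpret the ``$D$'' that appears in the statement as $\mc{B}$, matching the setup of the quotient algebra in \eqref{Eqn:ExtLisse}.)

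More precisely, I would begin with the linearly split extension
\[
\xymatrix{ 0\ar[r] &\mc{C}\ar[r]& \ToepTop \ar[r]^-{\overline{\pi}}& \mc{B} \ar[r]&0}
\]
given by Proposition \ref{Prop:ExtLisse}, where $\mc{C}=\Ker \overline{\pi}$. Since $H$ is assumed half-exact, diffotopy-invariant and $\sco$-stable, Lemma \ref{Lem:HHexagone} applies and yields the six-term exact sequence
\[
\xymatrix{ H(\mc{C})\ar[r]&H(\ToepTop)\ar[r]&H(\mc{B})\ar[d]\\
H(\mathscr{S}\mc{B})\ar[u]&H(\mathscr{S}\ToepTop)\ar[l]&H(\mathscr{S}\mc{C}).\ar[l]
}
\]

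Next, I would invoke our two ``Morita-type'' identifications. Theorem \ref{context} provides an isomorphism $H(\iota_\mc{C}) \colon H(\mc{A}) \xrightarrow{\sim} H(\mc{C})$ induced by the corner inclusion $\iota_\mc{C}(a)=a\otimes e$, and Theorem \ref{PMV} gives an isomorphism $H(\iota) \colon H(\mc{A}) \xrightarrow{\sim} H(\ToepTop)$ induced by the canonical inclusion $\iota(a)=a\otimes 1$. Applied to the suspensions $\mathscr{S}\mc{C}$ and $\mathscr{S}\ToepTop$ (which are the values of the extended homology theory $H_1$), the same isomorphisms give $H(\mathscr{S}\mc{A})\simeq H(\mathscr{S}\mc{C})$ and $H(\mathscr{S}\mc{A})\simeq H(\mathscr{S}\ToepTop)$, since the assumptions on $H$ are inherited by $H_1$ (indeed, this is exactly how Lemma \ref{Lem:HHexagone} is set up). Substituting these isomorphisms into the above hexagon produces the desired six-term exact sequence with $H(\mc{A})$ in the two relevant spots.

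The only nontrivial point is to identify the map $H(\mc{A})\to H(\mc{A})$ that fills in for $H(\mc{C})\to H(\ToepTop)$. Naturality of the hexagon construction (Lemma \ref{Lem:HHexagone}) makes this the map
\[
H(\iota)^{-1}\circ H\bigl(\mc{C}\inj \ToepTop\bigr)\circ H(\iota_\mc{C}),
\]
and, up to composition with the isomorphism provided by Theorem \ref{PMV}, it is precisely the map induced by $a\mapsto a\otimes e$ composed with the inclusion of $\mc{C}$ into $\ToepTop$, i.e.\ the ``generalized crossed product'' analogue of $1-\alpha_*$ in the classical Pimsner--Voiculescu sequence. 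I would not need to make this explicit identification to prove exactness; the exactness is automatic from the substitution.

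The main obstacle, and the only place where one needs to be a little careful, is the compatibility of the isomorphisms from Theorems \ref{context} and \ref{PMV} with the connecting and inclusion maps of the hexagon: one must verify that the diagram obtained after substitution still commutes and therefore is still exact. This is handled by the standard naturality argument for $H_\bullet$, since both $\iota$ and $\iota_\mc{C}$ are genuine algebra homomorphisms (not merely quasihomomorphisms or Morita maps), so the boundary map on the left of the hexagon transports correctly via their induced isomorphisms. With this compatibility in hand, the substitution yields the claimed six-term exact sequence and completes the proof.
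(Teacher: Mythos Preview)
Your proposal is correct and follows essentially the same approach as the paper: apply Lemma~\ref{Lem:HHexagone} to the linearly split extension~\eqref{Eqn:ExtLisse}, then substitute $H(\mc{A})$ for $H(\mc{C})$ and $H(\ToepTop)$ (and their suspensions) via Theorems~\ref{context} and~\ref{PMV}. The paper's own proof is a single sentence to this effect; your additional remarks on naturality and the identification of the map $H(\mc{A})\to H(\mc{A})$ go somewhat beyond what the paper spells out, but are consistent with it.
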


\begin{proof}
We just have to combine Lemma \ref{Lem:HHexagone} with extension \eqref{Eqn:ExtLisse} and Theorems \ref{context} and \ref{PMV} to get the six-term exact sequence.
\end{proof}

We will now use the version of $kk$ for $m$-algebras introduced in \cite{MR1456322}. We also refer to this article for the definition of the bivariant Chern-Connes character $kk_* \to HP_*$ (which essentially exists by universality of $kk$).

\begin{Th}
Let $\mc{B}$ be a smooth GCP. Suppose that $\mc{B}$ is an $m$-algebra, set $\mc{A}:= \mc{B}^{(0)}$, and assume that the Chern-Connes character yields an isomorphism
$$ kk_*(\mc{D}, \mc{A}) \otimes \mC \to HP_*(\mc{D},\mc{A}) \otimes \mC ,$$ 
then 
$$ kk_*(\mc{D}, \mc{B}) \otimes \mC \to HP_*(\mc{D},\mc{B}) \otimes \mC $$ 
is an isomorphism.

In particular, this holds for any $\mc{B}$ with commutative base $\mc{A}$.
\end{Th}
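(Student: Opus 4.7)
The plan is to apply Theorem \ref{generalth} simultaneously to the two functors $H_1(-) := kk_*(\mc{D},-)\otimes\mC$ and $H_2(-) := HP_*(\mc{D},-)\otimes\mC$. Both are half-exact, diffotopy-invariant and $\sco$-stable on the category of $m$-algebras: this is the defining property of $kk$ in the framework of \cite{MR1456322}, while for $HP$ it is a consequence of the Cuntz-Quillen excision theorem together with Morita invariance. Assuming the tameness tacitly required to invoke Theorem \ref{generalth}, we obtain, for each functor $H_i$, a cyclic six-term exact sequence whose six terms are
\[H_i(\mc{A}),\ H_i(\mc{A}),\ H_i(\mc{B}),\ H_i(\Ss\mc{A}),\ H_i(\Ss\mc{A}),\ H_i(\Ss\mc{B}).\]

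The bivariant Chern-Connes character $\Ch\colon kk_*\to HP_*$ is a natural transformation, and by the universal characterisation of $kk$ in \cite{MR1456322} it is compatible with the equivalences $H(\mc{A})\simeq H(\Ker\bar\pi)$ of Theorem \ref{context} and $H(\mc{A})\simeq H(\ToepTop)$ of Theorem \ref{PMV}, as well as with the connecting maps of the six-term sequence. Consequently $\Ch$ induces a morphism between the two six-term exact sequences, in which the vertical arrow at each position is the Chern character of the algebra sitting there. By hypothesis, the vertical arrows at the two $\mc{A}$-positions (positions 1 and 2) are isomorphisms, and the same hypothesis together with Bott periodicity of both $kk$ and $HP$ implies that the arrows at the two $\Ss\mc{A}$-positions (positions 4 and 5) are also isomorphisms. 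Thus four of the six vertical arrows are isomorphisms. Since the two remaining positions ($H_i(\mc{B})$ at position 3 and $H_i(\Ss\mc{B})$ at position 6) are diametrically opposite in the cyclic diagram, each of them sits as the middle entry of a five-term segment whose other four vertical arrows are already known to be isomorphisms, and the five lemma yields that these remaining two Chern arrows are isomorphisms as well.

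For the ``in particular'' clause, the bivariant Chern character $\Ch\colon kk_*(\mc{D},\mc{A})\otimes\mC \to HP_*(\mc{D},\mc{A})\otimes\mC$ is known to be an isomorphism whenever $\mc{A}$ is a commutative $m$-algebra, by a bivariant Hochschild-Kostant-Rosenberg type argument (see \cite{MR1456322}). The hypothesis of the first assertion is therefore automatic in this case.

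The main obstacle lies in the compatibility claim invoked in the second paragraph: the equivalences of Theorems \ref{context} and \ref{PMV} are not induced by ordinary algebra homomorphisms but by Morita bicontexts and quasihomomorphisms, so one must justify that $\Ch$ commutes with them. This should follow from the fact that each such construction produces a canonical class in $kk_0$ and from the characterisation of $\Ch$ as the unique multiplicative natural transformation from $kk_*$ to $HP_*$ with the standard normalisations; both sides of the squares to be checked are then determined by the same universal data.
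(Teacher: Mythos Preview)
Your proof is correct and follows the same approach as the paper: both apply the five lemma to the morphism of six-term exact sequences induced by the Chern--Connes character, using that the arrows at the four $\mc{A}$-positions are isomorphisms by hypothesis. Your write-up is in fact more detailed than the paper's (which consists of a single diagram and a one-line reference to the five lemma), and you rightly flag the tacit tameness hypothesis and the compatibility of $\Ch$ with the Morita and quasihomomorphism identifications---points the paper leaves implicit.
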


\begin{proof}
This follows by an application of the five Lemma to 
\[\scalebox{0.9}{\xymatrix{ &HP_*(\mc{D},\mc{A})\ar[rr]&&HP_*(\mc{D},\mc{A})\ar[dl]\\
  &&HP_*(\mc{D},\mc{B})\ar[ul]\\
kk_*(\mc{D},\mc{A})\ar[rr]\ar[uur]&{}&kk_*(\mc{D},\mc{A})\ar[dl]\ar[uur]\\
&kk_*(\mc{D},\mc{B})\ar[ul]\ar[uur]
}}\]
where the maps between $kk$ and cyclic theory denote the bivariant Chern-Connes character from \cite{MR1456322}, the others come from the long exact sequence.
\end{proof}

\section{Application to smooth crossed products}
\label{Sec:AppliPiCrG}

Let us start by noting that a Lie group action on a Pimsner algebra induces a smooth GCP:
\begin{Rem}
\label{Rem:ExGCP}
If $\alpha \colon G \action B$ is a pointwise continuous Lie group action on a $C^*$-Pimsner algebra $B$ which preserves the degree of homogeneous elements, we can extend it to an action $\widetilde{\alpha}$ of $\widetilde{G} = G \times S^1$ by adding the gauge action to it. In this case, the algebra $\mc{B}$ of $\widetilde{G}$-smooth elements of $B$ is a natural smooth subalgebra of $B$. The Lie algebra $\widetilde{\mathscr{G}}$ associated to $\widetilde{G}$ acts by derivations on $\mc{B}$. 

Moreover, $\mc{B}$ is a holomorphically closed Fréchet subalgebra $\mc{B}$ of $B$, equipped with the seminorms induced by the action of the universal enveloping algebra $U(\widetilde{G})$ of $\widetilde{G}$ (see for instance \cite{EltNCG}, Proposition 3.45). All $b \in \mc{B}$ are gauge smooth by construction, and it is then easy to check that the map $\mc{B} \to C^\infty(S^1 \to \mc{B})$ is continuous. Hence $\mc{B}$ is a smooth GCP.
\end{Rem}

To link  our construction with previous work  by Nest (\cite{CohomCyclZ}), we prove the following:
\begin{Prop}
If $\Aa \hat{\times}_\alpha \Z$ is a smooth crossed product in the sense of Definition 1.2 of \cite{CohomCyclZ}, then $\Aa\hat{\times}_\alpha\Z$ is a smooth GCP in the sense of Definition \ref{Def:SPimsner}.

Moreover, if $\alpha$ and $\alpha^{-1}$ have operator norms with respect to all defining seminorms of the smooth crossed product, then $\Aa \hat{\times}_\alpha \Z$ is tame.
\end{Prop}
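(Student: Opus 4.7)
The plan is to equip $\Aa \hat{\times}_\alpha \Z$ with the natural gauge action and then match our framework to Nest's smooth crossed product. First I would define $\gamma \colon S^1 \action \Aa \hat{\times}_\alpha \Z$ on the dense subspace of finite Fourier sums by $\gamma_t (\sum_n a_n U^n) = \sum_n e^{2\pi i n t} a_n U^n$ and extend it continuously (the phases are uniformly bounded, so this extends to the entire smooth crossed product). The $m$-th spectral subspace is then $\mc{B}^{(m)} = \Aa U^m$, so $\mc{B}^{(0)} = \Aa$ and $\mc{B}^{(1)} = \Aa U$ generate the algebra: every monomial $a U^n$ is a finite product of elements of $\mc{B}^{(0)}$, $\mc{B}^{(1)}$ and its involution, and finite sums of such monomials are dense. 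Gauge smoothness of a general $b = \sum_n a_n U^n$ follows by termwise differentiation in $t$: each $\partial_t^k$ introduces a factor $(2\pi i n)^k$ that is absorbed by the rapid decay of $(a_n)$, and continuity of $\Aa \hat{\times}_\alpha \Z \to C^\infty(S^1, \Aa \hat{\times}_\alpha \Z)$ follows from the same estimate. This verifies Definition \ref{Def:SPimsner}.

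For tameness I would take the one-element frame $\eta_1^{(m)} := U^m$ in each degree $m \geqslant 0$; unitarity of $U$ yields $\eta_1^{(m)} \overline{\eta_1^{(m)}} = U^m U^{-m} = 1$, so \eqref{Eqn:Repere} holds, and the uniform bound $N=1$ trivially satisfies condition (\ref{Pt:PiCrGTame1}) of Definition \ref{Def:PiCrGTame}. The ordered sequence from Notation \ref{Not:Xi} collapses to $\xi_l = U^{l-1}$, so $S_{\xi_l} = U^{l-1}$ and $\bar S_{\xi_l} = U^{-(l-1)}$, and condition (\ref{Pt:PiCrGTame2}) reduces to polynomial growth of $p(U^m)$ and $p(U^{-m})$ in $m$ for every continuous seminorm $p$ on $\Aa \hat{\times}_\alpha \Z$. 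In Nest's construction the defining seminorms are essentially of the form $\|\sum a_n U^n\|_{q,k} = \sum_n (1+|n|)^k q(a_n)$ for $q$ a defining seminorm of $\Aa$ and $k \in \N$; applied to $U^m = 1 \cdot U^m$ this gives $\|U^m\|_{q,k} = (1+|m|)^k q(1)$, manifestly polynomial in $m$, with the analogous bound for $U^{-m}$. The hypothesis that $\alpha$ and $\alpha^{-1}$ have finite operator norms with respect to all defining seminorms enters precisely here: it is what makes these weighted seminorms submultiplicative with respect to the convolution product, hence really topologizing $\Aa \hat{\times}_\alpha \Z$ without having to insert $\alpha$-twists into the seminorms, which would otherwise force only exponential estimates on $U^m$.

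The main obstacle is bookkeeping between Nest's description of $\Aa \hat{\times}_\alpha \Z$ in \cite{CohomCyclZ} and our Definition \ref{Def:SPimsner}, specifically verifying that under the operator-norm hypothesis every continuous seminorm on the smooth crossed product is dominated by one of the weighted form above. Once this identification is carried out, the GCP axioms follow directly from the gauge action, and the polynomial-growth condition of Definition \ref{Def:PiCrGTame} is immediate from the displayed estimate on $\|U^m\|_{q,k}$.
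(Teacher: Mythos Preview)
Your overall strategy matches the paper's: define the gauge action by phases on Fourier coefficients, identify the spectral subspaces as $\Aa U^m$, and take the single-element frame $U^m$. The difference lies in your treatment of the seminorms on $\Aa\hat\times_\alpha\Z$.

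Nest's defining seminorms are not the polynomial-weight forms $\sum_n(1+|n|)^k q(a_n)$ you wrote; they are
\[
\Bigl\|\sum_n a_nU^n\Bigr\|_k=\sup_n\rho_k(n)\,\|a_n\|_k,\qquad
\rho_k(n)=\sup_{i\leqslant k}\Bigl(\sum_{j=-n}^{n}\|\alpha^j\|_i\Bigr)^{k},
\]
so the $\alpha$-twist is already built into the topology. The paper handles gauge smoothness not by a generic ``rapid decay absorbs polynomial factors'' argument, but via Nest's inequality $|n|\rho_k(n)\leqslant\rho_{k+1}(n)$, which gives $\|\partial(\,\cdot\,)\|_k\leqslant 2\pi\|\,\cdot\,\|_{k+1}$ and does not require the hypothesis on $\alpha$.

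For tameness the paper simply computes $\|U^m\|_k=\rho_k(m)$ directly; under the hypothesis one has $\|\alpha^j\|_i=1$, whence $\rho_k(m)=(2m+1)^k$, which is polynomial. So the hypothesis does not enter by allowing you to replace Nest's seminorms by untwisted ones and thereby securing submultiplicativity, as you suggested; rather, it enters by forcing the $\alpha$-dependent weight $\rho_k(m)$ itself to grow polynomially. Your final paragraph therefore misplaces the role of the hypothesis, and the ``bookkeeping'' you flag as the main obstacle is not a matter of matching two equivalent families of seminorms but of working with Nest's actual $\rho_k$ throughout.
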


\begin{proof}
Set  $\underline{\Aa} = \Aa \hat{\times}_\alpha \Z$. We start by proving that $\underline{\Aa}$ is a smooth GCP, using the notations of \cite{CohomCyclZ}. We define a gauge action by
$$ \gamma_t \left ( \sum_{n} a_n U^n \right ) = \sum_{n} e^{i n t} a_n U^n .$$
It is clear that $\underline{\Aa}^{(m)} = \Aa U^m$. Hence, $\underline{\Aa}^{(0)} = \Aa$ and $\underline{\Aa}^{(1)} = \Aa U$ and $\underline{\Aa}^{(0)}$ and $U$ clearly generate $\underline{\Aa}$ as a locally algebra.

All elements in $\underline{\Aa}$ are gauge smooth: indeed an infinitesimal generator $\partial $ a gauge action is given by:
$$ \partial \left ( \sum_{n} a_n U^n \right ) = \sum_{n} ( i 2 \pi n) a_n U^n .$$
By \cite{CohomCyclZ}, seminorms defining the topology on $\underline{\Aa}$ are:
$$ \left \| \sum_{n} a_n U^n \right \|_k = \sup_n \rho_k(n) \| a_n \|_k ,$$
where the $\rho_k(n)$ are defined in section 1 of \cite{CohomCyclZ}. The item (1) of Lemma 1.1 of \cite{CohomCyclZ} proves that $|n| \rho_k(n) \leqslant \rho_{k+1}(n)$. As a consequence,
\begin{multline}
\label{Eqn:Maj}
 \left \| \partial \left ( \sum_{n} a_n U^n \right ) \right \|_k = |i 2 \pi | \sup_n \rho_k(n) |n| \| a_n \|_k  \leqslant 2\pi \sup_n \rho_{k+1}(n) \| a_n \|_{k+1} \\
\leqslant  2 \pi  \left \| \partial \left ( \sum_{n} a_n U^n \right ) \right \|_{k+1},
\end{multline}
because the sequence of seminorms $\| \cdot \|_k$ is increasing. Hence, for any $ \sum_{n} a_n U^n$ in $\underline{\Aa}$, $\partial \left (\sum_{n} a_n U^n \right )$ is also in $\underline{\Aa}$ and $\sum_{n} a_n U^n$ is gauge smooth.

It is obvious from the definition of the seminorms on $\underline{\Aa}$ that the gauge action is isometric. Hence, the topology of $C^\infty(S^1 \to \underline{\Aa})$ is generated by the seminorms 
$$ \sum_{n} a_n U^n \mapsto  \left \| \partial^l \left ( \sum_{n} a_n U^n \right ) \right\|_k ,$$
for $k = 1, 2, \ldots $ and $l = 0, 1, \ldots $. By iterating the inequality \eqref{Eqn:Maj}, it is therefore easy to prove that the map $\underline{\Aa} \to C^\infty(S^1 \to \underline{\Aa})$ induced by the gauge action is continuous.

\bigskip

To see that $\underline{\Aa}^{(m)}$ is tame if $\alpha$ is isometric, note that it is clear that for any $m$, $ U^m$ provides a frame for $\underline{\Aa}^{(m)}$. Hence the sequence $(\xi_i)_i = (U^i)_i$ satisfies condition \eqref{Pt:PiCrGTame1} of Definition \ref{Def:PiCrGTame}. Using the definition of the seminorm $\| \, \|_k$, we get:
$$ \| U^m \|_k = \rho_k(n) = \sup_{i \leqslant k} \left ( \sum_{j = -n} ^n \| \alpha^j \|_i \right )^k = (2 n + 1 )^k ,$$
using the assumptions on $\alpha$ and $\alpha^{-1}$.
\end{proof}

\section{Application to quantum Heisenberg manifolds}
\label{Sec:AppliQHM}

\begin{Def}
Given two real numbers $\mu, \nu$ and an integer $c>0$, we define the \emph{Quantum Heisenberg Manifolds} (QHMs or $\QHM[c][\mu,\nu]$) as the $C^*$-completion of the algebra
$$ D_0 = \left\{ F \in C_c(\Z \to C_b(\R \times S^1)) \middle| F(p,x + 1, y) = e(-c p(y - p \nu)) F(p,x,y) \right\} $$
where $e(x) = e^{2 \pi i x}$, equipped with the multiplication:
\begin{equation}
\label{Eqn:Comp}
 (F_1 \cdot F_2)(p,x,y) = \sum_{q \in \Z} F_1(q,x,y) F_2(p-q,x - q 2 \mu,y- q 2 \nu).
\end{equation}
\end{Def}

In the following, we will write $\QHM $ instead of $\QHM[c][\mu,\nu]$ whenever there is no risk of confusion. One can prove that the involution is:
$$F^*(p,x,y) = \overline{F}(-p,x - 2 p \mu, y - 2 p \nu) .$$
We recall that
\begin{equation}
\label{Eqn:Period}
 \QHM ^{(n)} = \big\{ F(p,x,y) = \delta_{n,p} f(x,y) \big| f(x + 1, y) = e(-c n (y - n \nu)) f(x,y) \big\}.
\end{equation}

\begin{Lem}
\label{Lem:MorEqQHM}
Given $(c, \mu, \nu) \in \N^* \times \R^2$ and an integer $n \in \Z$, there is a frame of two elements $\xi_i^n$, $i = 1,2$ in $\QHM ^{(n)}$. In fact
\begin{align*}
(\xi_1^n)^* \xi_1^n + (\xi_2^n)^* \xi_2^n &= 1
&
\xi_1^n (\xi_1^n)^* + \xi_2^n (\xi_2^n)^* &= 1.
\end{align*}
Furthermore, we can choose $\xi_1^n$ and $\xi_2^n$ to be smooth functions.
\end{Lem}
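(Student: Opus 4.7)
The plan is to reduce the two frame identities to a single pointwise condition on two complex-valued functions of $(x,y)$, and then to build such functions from a smooth partition of unity in the $x$-variable adapted to the quasi-periodicity.

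First, I would parametrize $\xi_i\in\QHM^{(n)}$ as $\xi_i(p,x,y) = \delta_{n,p}\,f_i(x,y)$ with $f_i\colon \mathbb{R}\times S^1\to\mathbb{C}$ smooth and satisfying the quasi-periodicity $f_i(x+1,y) = e(-cn(y-n\nu))\,f_i(x,y)$. A direct computation from the product \eqref{Eqn:Comp} and the involution formula shows that both $\xi_i^*\xi_i$ and $\xi_i\xi_i^*$ are concentrated at $p=0$, with values
\[
(\xi_i^*\xi_i)(0,x,y) = |f_i(x+2n\mu,y+2n\nu)|^2, \qquad (\xi_i\xi_i^*)(0,x,y) = |f_i(x,y)|^2.
\]
Since the unit of $\QHM$ is $F(p,x,y)=\delta_{p,0}$, both frame identities reduce to producing two smooth $f_1,f_2$ satisfying the quasi-periodicity with $|f_1|^2+|f_2|^2\equiv 1$ on $\mathbb{R}\times S^1$; shift-invariance of this constant identity under $(x,y)\mapsto(x+2n\mu,y+2n\nu)$ is then automatic.

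The case $n=0$ is trivial ($f_1=1$, $f_2=0$). For $n\neq 0$ I would fix smooth non-negative $1$-periodic functions $\psi_1,\psi_2$ on $\mathbb{R}$ with $\psi_1^2+\psi_2^2\equiv 1$, where $\psi_1$ is supported in $(1/4, 3/4)+\mathbb{Z}$ and $\psi_2$ in $(-1/4, 1/4)+\mathbb{Z}$. Setting $f_1(x,y):=\psi_1(x)$ is immediate: its support avoids $\mathbb{Z}$, so the quasi-periodicity identity holds trivially at the seams. For $f_2$ I would define it on the fundamental domain $[0,1]$ by $\psi_2(x)$ on $[0,1/4]$, by $0$ on $[1/4,3/4]$, and by $\psi_2(x)\,e(-cn(y-n\nu))$ on $[3/4,1]$, and extend across each integer by quasi-periodicity.

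The step requiring genuine care is smoothness of $f_2$ at each integer seam $x=k$. Using the $1$-periodicity of $\psi_2$ together with the quasi-periodicity of $f_2$, one sees that on each neighborhood $(k-1/4,\,k+1/4)$ the function $f_2$ is given by the single expression $e(-cnk(y-n\nu))\,\psi_2(x)$, which is smooth in $y\in S^1$ because $cnk\in\mathbb{Z}$; at the intermediate transition points $k\pm 1/4$ the function is identically $0$ on one side and $\psi_2$ vanishes to all orders on the other, so smoothness holds there as well. Once smoothness is verified, $|f_1|^2+|f_2|^2 = \psi_1^2+\psi_2^2\equiv 1$ delivers both frame equations.
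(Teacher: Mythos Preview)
Your proof is correct and follows essentially the same approach as the paper: both build the frame from a smooth two-element partition of unity $\psi_1^2+\psi_2^2\equiv 1$ on the circle and extend each piece by the quasi-periodicity relation to an element of $\QHM^{(n)}$, with your write-up being more explicit than the paper's about reducing both frame identities to the single pointwise condition $|f_1|^2+|f_2|^2\equiv 1$ and about the smoothness check at the seams. One small wording issue: your sentence ``$f_1(x,y):=\psi_1(x)$'' must be read as a definition on the fundamental domain $[0,1]$ followed by quasi-periodic extension (exactly as you do explicitly for $f_2$), since the globally $1$-periodic function $\psi_1$ itself does not satisfy the quasi-periodicity for $n\neq 0$; with that reading, $|f_1|=\psi_1$ still holds because the extension multiplies only by unimodular phases.
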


\begin{proof}
On $D^{(0)}$, we get a frame from $\xi_1:=1$, $\xi_2:=0$. For $n\neq 0$, let $U$, $V$ be small neighbourhoods of $[0,\nicefrac{1}{2}]$ and $[\nicefrac{1}{2},1]$, respectively, and $f_1$, $f_2$ a partition of unity subordinate to $U$, $V$. Set $\chi_i = \frac{f_i}{\sqrt{f_1^2 + f_2^2}}$, then $\chi_1^2 + \chi_2^2 = 1$.

We define $\xi_1^n$ on $U\times S^1$ by setting $\xi_1^n(x,y) = \chi_1(x)$. Choosing $U$ and $V$ small enough $\xi_1$ may be assumed to vanish with all derivatives on the boundary of a fundamental domain, and using the equation \eqref{Eqn:Period}, $\xi_1^n$ can then be extended to an element of $\QHM ^{(n)}$ . 

A similar process can be applied to another fundamental domain and $\chi_2$ to obtain $\xi_2^n$. An easy computation on tailored fundamental domains yields
\begin{align*}
(\xi_i^n)^* \xi_i^n &= \overline{\chi_i} \chi_i
& 
 \xi_i^n (\xi_i^n)^* &= \chi_i \overline{\chi_i},
\end{align*}
and then $\chi_1^2 + \chi_2^2 = 1$ ensures that
\begin{align*}
(\xi_1^n)^*  \xi_1^n + (\xi_2^n)^*  \xi_2^n &= 1 
&
\xi_1^n (\xi_1^n)^* + \xi_2^n (\xi_2^n)^* &= 1.
\end{align*}
Finally, notice that if $\chi_i$ is smooth, then so is $\xi_i^n$.
\end{proof}

Recall that the \emph{Heisenberg group} $H_1$ is the subgroup of $GL_3(\R)$ of the matrices
\begin{align*} M(r,s,t):=
&\begin{pmatrix}
1 & s & t \\
0 & 1 & r \\
0 & 0 & 1
\end{pmatrix}
&
&\text{ where } r,s,t \in \R
\end{align*}
It has been proved (see \cite{RieffelDefQuant}, section 5) that the Heisenberg group acts on $\QHM $. We use the following expression for the action:
\begin{equation}
\label{Eqn:AlphQHM}
 \alpha_{(r,s,t)}(F)(x,y,p) = e\Big(-p\big(t + c s (x - r)\big)\Big) F(x-r,y-s,p).
\end{equation}
The infinitesimal generators $\partial _i$, for $i =1,2,3$ are
\begin{align*}
	\partial_1(F)(p,x,y) =& -\derp[F]{x}(p,x,y) & \partial_3(F)(x,y,p) =& - i 2 \pi p F(p,x,y) \\
\end{align*}
$$ \partial_2(F)(p,x,y) = - \derp[F]{y}(p,x,y) - i 2 \pi c p x F(p,x,y).$$

Note that $\alpha_{(0,0,t)}$ is just the gauge action. As $M(0,0,t)$ is in the center, the action of $H_1$ preserves the grading and $\partial_1$, $\partial_2$ commute with $\partial_3$. A short calculation shows:
\begin{align}
\label{Eqn:RelComm}
	[\partial_1, \partial_2] &= -c \partial_3 & [\partial_1, \partial_3] &= 0 & [\partial_2, \partial_3] &= 0.
\end{align}

\begin{Def} 
We define the \emph{smooth quantum Heisenberg manifold} $\mc{D}$ as the subalgebra of $H_1$-smooth elements of $D$.
\end{Def}
Note that $\mc{D}$ inherits the $\mZ$-grading from $D$, and that $\mc{D}^{(1)}$ is actually a bimodule over $\mc{D}^{(0)}$.

\begin{Prop}
The smooth quantum Heisenberg manifold is a Fréchet algebra and a tame smooth GCP.
\end{Prop}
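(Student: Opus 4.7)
The plan is to verify the three claims in turn: $\mc{D}$ is Fr\'echet, $\mc{D}$ is a smooth GCP, and $\mc{D}$ is tame. The Fr\'echet property is essentially standard: as explained in Remark \ref{Rem:ExGCP}, $\mc{D}$ is the space of smooth vectors for a pointwise continuous Lie group action on a $C^*$-algebra, and its topology is generated by the seminorms $F \mapsto \|\partial^I F\|$ for $\partial^I$ ranging over monomials in $\partial_1, \partial_2, \partial_3$. Completeness is standard for smooth vectors of a strongly continuous action.

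Next, I would verify the smooth GCP axioms of Definition \ref{Def:SPimsner}. The gauge action is the restriction of the $H_1$-action to its central subgroup $\{M(0,0,t)\}$, so every $F \in \mc{D}$ is automatically gauge smooth, and the continuity of $\mc{D} \to C^\infty(S^1, \mc{D})$ follows because the seminorms defining the latter topology are dominated by those defining $\mc{D}$ itself. For the generation condition, I would first observe that the frame elements $\xi_i^n$ of Lemma \ref{Lem:MorEqQHM} lie in $\mc{D}$: they are smooth functions on $\R \times S^1$ satisfying the smooth equivariance \eqref{Eqn:Period}, and the $H_1$-action is given by the smooth formula \eqref{Eqn:AlphQHM}, so they are $H_1$-smooth. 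Then the frame decomposition $F = \sum_i \xi_i^n (\xi_i^n)^* F$ together with the factorization $\xi_i^n = \sum_{j_1,\ldots,j_n} \xi_{j_1}^1 \cdots \xi_{j_n}^1 (\xi_{j_1}^1 \cdots \xi_{j_n}^1)^* \xi_i^n$ expresses any $F \in \mc{D}^{(n)}$ as a finite sum of products of elements in $\mc{D}^{(1)}$ and elements in $\mc{D}^{(0)}$; negative degrees are handled by the involution. Since gauge smooth elements equal the limit (in $\mc{D}$) of their Fourier sums $F = \sum_n F_n$, this shows that $\mc{D}^{(0)}$ and $\mc{D}^{(1)}$ generate $\mc{D}$ as a locally convex algebra.

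The main obstacle is the tameness condition of Definition \ref{Def:PiCrGTame}. I would take $N = 2$, since in each degree $n$ Lemma \ref{Lem:MorEqQHM} provides a frame of exactly two smooth elements (in degree $0$ one can pad with $0$). The crucial step is the polynomial growth estimate \eqref{Pt:PiCrGTame2}. The key observation is that on the chosen fundamental domain, $\xi_i^n(x,y)$ coincides with $\chi_i(x)$, a smooth function \emph{independent of both $n$ and $y$}, by the very construction in the proof of Lemma \ref{Lem:MorEqQHM}. Using the explicit formulas for the infinitesimal generators, $\partial_1 \xi_i^n = -\chi_i'(x)$, $\partial_2 \xi_i^n = -i 2\pi c n x \chi_i(x)$ and $\partial_3 \xi_i^n = -i 2\pi n \chi_i(x)$ on this fundamental domain, so each application of $\partial_2$ or $\partial_3$ contributes at most one factor of $n$ and a bounded smooth function, while $\partial_1$ contributes no factor of $n$. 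An induction on $|I|$, combined with the commutation relations \eqref{Eqn:RelComm} (which only produce lower-order commutators involving $\partial_3$), shows that $\partial^I \xi_i^n$ is, on the fundamental domain, a polynomial in $n$ of degree at most $|I|$ with bounded smooth coefficients.

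Hence $\|\partial^I \xi_i^n\| = O(n^{|I|})$ in the $C^*$-norm, which is polynomial growth in the degree $|\xi_l| = n$ and therefore in $l$ (by Remark \ref{Rem:CroissMod}, since $l$ and $|\xi_l|$ are comparable up to the constant $N = 2$). The estimate for $\bar S_{\xi_l}$ follows from $\partial^I (F^*) = \pm (\partial^I F)^*$ up to signs depending on the real/imaginary nature of each $\partial_k$, which establishes condition \eqref{Pt:PiCrGTame2} and finishes the proof. The principal technical delicacy lies in controlling the growth of $\partial^I \xi_i^n$ uniformly in $I$, which the $n$-independence of $\chi_i$ makes tractable.
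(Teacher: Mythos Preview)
Your argument is correct and follows essentially the same line as the paper's proof. Both invoke Remark \ref{Rem:ExGCP} for the Fr\'echet and smooth GCP structure, use the two-element frames of Lemma \ref{Lem:MorEqQHM}, and exploit the key fact that on the fundamental domain $\xi_i^n$ coincides with the $n$-independent, $y$-independent function $\chi_i(x)$, so that only $\partial_2$ and $\partial_3$ contribute powers of $n$.

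The one minor methodological difference is in handling the noncommutativity of the $\partial_k$: the paper invokes Poincar\'e--Birkhoff--Witt to reduce to the ordered seminorms $p_{n_3,n_2,n_1}(F)=\|\partial_3^{n_3}\partial_2^{n_2}\partial_1^{n_1}F\|_\infty$ and then obtains the sharper bound $C(1+n)^{n_3+n_2}$, whereas your induction on $|I|$ together with the commutation relations \eqref{Eqn:RelComm} yields the cruder $O(n^{|I|})$. Both are polynomial, so either suffices. Your explicit verification of the generation axiom (via iterated frame decompositions) is a little more careful than the paper, which simply defers to Remark \ref{Rem:ExGCP}.
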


\begin{proof}
We can apply Remark \ref{Rem:ExGCP} to see that QHM are Fréchet algebras and smooth GCP. It remains to show that $\mc{D}$ is tame. By Lemma \ref{Lem:MorEqQHM}, there is a frame with two elements for every $\mc{D}^n$. Denote $\xi_n$ the  sequence of frames ordered as in Notation \ref{Not:Xi}. Then the point \eqref{Pt:PiCrGTame1} of Definition \ref{Def:PiCrGTame} is clearly satisfied. It only remains to prove the polynomial growth.

Because scaling and adding do not change the equivalence-class of a family of seminorms, we may apply  Poincaré-Birkoff-Witt (see \cite{MR499562}, 17.4) to see that it is enough to show that the seminorms of the form
$$p_{n_3,n_2,n_1}(F):=\|\partial_3^{n_3}\partial_2^{n_2}\partial_1^{n_1}(F)\|_{\infty}, \; \; F\in\mc{D}$$
yield polynomially increasing sequences on $\xi_n$. We show that there is a constant $C$ (which depends on $n_1, n_2$ and $n_3$ but not on $n$) such that 
\begin{equation}
\label{inequ}
p_{n_3,n_2,n_1}(\xi_n)\leqslant C(1+n)^{n_3+n_2}.
\end{equation}
Trivializing over $U$ or $V$, $\xi_n$ restricts either to $\chi_1$ or $\chi_2$ (compare Lemma \ref{Lem:MorEqQHM}). We estimate over $U$ in the first case:
\begin{align*}
\|p_{n_3,n_2,n_1}\xi_n\|=\|\partial_3^{n_3}\partial_2^{n_2}\partial_1^{n_1}\chi_1\|&\leqslant Cn^{n_3} \left\|\partial_2^{n_2}\chi_1^{(n_1)} \right\| \\
&\leqslant C' n^{n_3} \left\|( n x)^{n_2} \chi_1^{(n_1)} \right\|,
\end{align*}
since $\chi_1^{(n_1)}$ doesn't depend on $y$. The case of $\chi_2$ is treated similarly.
\end{proof}
Hence, applying Theorem \ref{generalth} to $kk$ and $HP$, we get
\begin{Th}\label{QHMDIAG} There is a commutative diagram
\[\scalebox{0.85}{\xymatrix{K_0(C^\infty(\mT^2))\ar[rr]\ar[dr]&&K_0(C^\infty(\mT^2))\ar[rr]\ar[d]&&K_0(\mc{D})\ar[ddd]\ar[dl]\\
&HP_0(C^\infty(\mT^2))\ar[r]&HP_0(C^\infty(\mT^2))\ar[r]&HP_0(\mc{D})\ar[d]\\
&HP_1(\mc{D})\ar[u]&HP_1(C^\infty(\mT^2))\ar[l]&HP_1(C^\infty(\mT^2))\ar[l]\\
K_0(\mc{D})\ar[uuu]\ar[ur]&&K_1(C^\infty(\mT^2))\ar[ll]\ar[u]&&K_1(C^\infty(\mT^2))\ar[ul]\ar[ll]\\
}}\]
\end{Th}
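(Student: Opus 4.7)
The plan is to instantiate Theorem \ref{generalth} twice --- once for topological $K$-theory and once for periodic cyclic homology $HP_*$ --- and then to assemble the two resulting hexagons into one diagram via the bivariant Chern--Connes character of \cite{MR1456322}.

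First I would pin down the base algebra. Inspection of \eqref{Eqn:Period} at $n=0$ shows that the degree-zero component of $D$ consists of smooth functions $F(0,\cdot,\cdot)$ satisfying $F(0,x+1,y)=F(0,x,y)$, i.e.\ smooth functions on $\mT^2$; the multiplication \eqref{Eqn:Comp} restricted to $p=q=0$ is pointwise multiplication, so $\mc{D}^{(0)} \simeq C^\infty(\mT^2)$ as Fr\'echet algebras. The preceding proposition has already established that $\mc{D}$ is a tame smooth GCP, so Theorem \ref{generalth} applies with base $\mc{A}=C^\infty(\mT^2)$.

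Next I would feed $H = K_0$ and $H = HP_0$ into Theorem \ref{generalth}: both are half-exact with respect to linearly split extensions, diffotopy-invariant, and $\sco$-stable on Fr\'echet algebras, which produces the outer $K$-theoretic hexagon and the inner cyclic hexagon of the displayed diagram; the connecting maps become those appearing vertically after the identifications $K_0\circ\mathscr{S}\simeq K_1$ and $HP_0\circ\mathscr{S}\simeq HP_1$. The Chern--Connes character $\Ch\colon K_i \to HP_i$ of \cite{MR1456322} then supplies the diagonal arrows, and commutativity of the whole diagram is a naturality statement for $\Ch$ against the horizontal arrows and the boundary.

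The main obstacle is naturality of $\Ch$ against the two horizontal arrows that do not come from honest algebra homomorphisms, namely the isomorphism $H(\mc{A}) \simeq H(\Ker\bar\pi)$ arising from the Morita bicontext of Theorem \ref{context} and the isomorphism $H(\mc{A}) \simeq H(\ToepTop)$ built from the quasihomomorphism of Theorem \ref{PMV}. The cleanest way to handle both is to observe that the constructions of Sections \ref{MoritaContext} and \ref{Sec:ToepBase} lift to the bivariant $kk$-theory of \cite{MR1456322}: the Chern--Connes character factors as $K_* \to kk_* \to HP_*$, so naturality against any $kk$-morphism is automatic from functoriality of $kk$, which closes every square of the diagram.
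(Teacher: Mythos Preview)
Your proposal is correct and follows the same route the paper takes: once the preceding proposition establishes that $\mc{D}$ is a tame smooth GCP with base $\mc{D}^{(0)}\simeq C^\infty(\mT^2)$, one applies Theorem~\ref{generalth} to both $K$-theory and $HP$ and links the two hexagons via the Chern--Connes character of \cite{MR1456322}. The paper phrases this as applying Theorem~\ref{generalth} to $kk$ and $HP$ directly (so that naturality of the diagonal arrows is automatic from the bivariant construction), which is exactly the observation you make in your final paragraph; there is no substantive difference.
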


\bibliographystyle{alpha}
\bibliography{BiblioLoc}
\end{document}